\documentclass[a4paper,11pt,reqno]{amsart}

\allowdisplaybreaks
\numberwithin{equation}{section}
\usepackage{mathtools}
\usepackage[shortlabels]{enumitem}
\usepackage{hyperref}
\usepackage[scriptsize,normal]{subfigure}
\usepackage[text={40pc,52pc},centering]{geometry}
\usepackage{placeins}

\newtheorem{thm}{Theorem}[section]
\newtheorem{lemma}[thm]{Lemma}

\theoremstyle{remark}
\newtheorem{remark}[thm]{Remark}

\newcommand{\real}{\mathbb R}
\DeclareMathOperator{\Var}{Var}
\DeclareMathOperator{\indicator}{\mathbf{1}}
\DeclareMathOperator{\hyper}{\sideset{_2}{_1}{\mathop F}}

\begin{document}

\title[Drift parameter estimation in the double mixed fractional Brownian model]{Drift parameter estimation in the double mixed fractional Brownian model via solutions of Fredholm equations with~singular kernels}

 \author{Yuliya Mishura}
 \address[Yu.\ Mishura]{Department of Probability, Statistics and Actuarial Mathematics, Taras Shevchenko National University of Kyiv, Volodymyrska 64, 01601 Kyiv, Ukraine}
 \email{yuliyamishura@knu.ua}
 \thanks{The first author is thankful to the University of Padova.}
 
 \author{Kostiantyn Ralchenko}
 \address[K.\ Ralchenko]{Department of Probability, Statistics and Actuarial Mathematics, Taras Shevchenko National University of Kyiv, Volodymyrska 64, 01601 Kyiv, Ukraine}
  \email{kostiantynralchenko@knu.ua}
 \address[K.\ Ralchenko]{School of Technology and Innovations, University of Vaasa, P.O.\ Box 700, FIN-65101 Vaasa, Finland}
 \email{kostiantyn.ralchenko@uwasa.fi}
 \thanks{The second author is supported by the Research Council of Finland, decision number 367468.}
 
 \author{Mykyta Yakovliev}
 \address[M.\ Yakovliev]{Department of Mathematical Analysis, Taras Shevchenko National University of Kyiv, Volodymyrska 64, 01601 Kyiv, Ukraine}
 \email{mykyta.yakovliev@knu.ua}%

 \begin{abstract}
We consider drift parameter estimation in a model driven by the sum of two independent fractional Brownian motions with different Hurst indices.
Although the maximum likelihood estimator (MLE) for this model is known theoretically, its computation requires solving an operator equation involving fractional covariance operators.
We develop an effective numerical method for approximating the solution of this equation by reformulating it as a Fredholm integral equation of the second kind with a weakly singular kernel.
The resulting algorithm enables practical computation of the MLE.
Numerical experiments illustrate the performance of the method.
 \end{abstract}
 
\subjclass{60G22, 62M09, 45B05, 65R20}
\keywords{Fractional Brownian motion, mixed Gaussian processes, drift estimation, maximum likelihood estimator, Fredholm integral equation}

\maketitle

\section{Introduction}
Models involving sums of fractional Brownian motions arise naturally when stochastic dynamics exhibit multiple time scales or heterogeneous memory structures.
In this paper, we consider the model
\begin{equation}\label{eq:themodel}
X_t = \theta t + B^{H_1}_t + B^{H_2}_t, \quad t\ge 0,
\end{equation}
where $B^{H_1}$ and $B^{H_2}$ are two independent fractional Brownian motions with Hurst parameters\linebreak
$H_1, H_2 \in (\frac{1}{2},1)$.
We study the estimation of the drift parameter $\theta\in\real$ based on continuous-time observations of the trajectory $\{X_t, t\in[0,T]\}$.

Mixed models such as \eqref{eq:themodel}, involving fractional Brownian motions with different Hurst indices, are of considerable interest in applications, particularly in economics and financial mathematics, where the component with the smaller Hurst index typically represents short-term fluctuations, while the component with the larger index captures long-term dynamics.

The particular case $H_1=1/2$ corresponds to the mixed fractional Brownian motion introduced by Cheridito~\cite{cheridito}. Its probabilistic properties were studied in~\cite{Zili2006}, and various financial applications were considered in
\cite{Filatova2008,Sun2013,XZZZ2012,YanLuXia2025,ZXH2009}.
The model \eqref{eq:themodel} with arbitrary Hurst indices was investigated in~\cite{El-Nouty,Miao2008}, while financial applications were discussed in~\cite{HeChen}.
More general superpositions of fractional Brownian motions were studied in~\cite{Lechiheb2025,thale2009}. Recent developments include mixtures of infinitely many fractional Brownian motions~\cite{AlmaniSottinen2023}, limit theorems leading to Gaussian processes with such mixtures~\cite{BenderButkoDOvidioPagnini2024}, and time-changed versions of \eqref{eq:themodel} via inverse stable subordinators~\cite{Mliki2023}.

Statistical inference for mixed fractional models has also attracted considerable attention.
Drift estimation and filtering for mixed fractional Brownian motion were studied in~\cite{CCK2012}, and later extended to the case of two independent fractional Brownian motions in~\cite{Mishura2016}. Recent results include asymptotic inference for mixed fractional Brownian motion under high-frequency observations~\cite{Cai2026}.

In the present paper we focus on maximum likelihood estimation of the drift parameter $\theta$ in model \eqref{eq:themodel}. We develop an approximation method for computing the maximum likelihood estimator (MLE) introduced in \cite{MRStwofbm}, which is based on a general approach to drift estimation for Gaussian processes proposed in \cite{MRS-AJS}. In \cite{MRStwofbm} the MLE was thoroughly studied from a theoretical perspective: it was shown to be well defined and to possess properties such as strong and $L_2$-consistency and finite-sample normality.

However, from a practical viewpoint this estimator is not directly computable. It involves the $L_2$-solution of an operator equation whose existence and uniqueness are known, but for which no effective numerical procedure has been developed. The aim of the present paper is to provide such a procedure.

Our main idea is to rewrite the corresponding operator equation as a Fredholm integral equation of the second kind with a weakly singular kernel, for which efficient numerical methods are available. In particular, we employ the numerical approach developed in \cite{MRZ2024}, where kernels with similar singularities were treated. This method is based on a modification of the product-integration method for weakly singular kernels (see, e.g., \cite{Prem}) and on the algorithm described in \cite{Makogin21}, which relies on the modification proposed by Neta \cite{Neta}.

Another approach to drift estimation in model \eqref{eq:themodel} was proposed in \cite{Mishura2016}. It is also based on maximum likelihood estimation, but first transforms the observed process by the integral operator
\[
\widetilde{X}_t = C(H_1)\int_0^t s^{1/2-H_1} (t-s)^{1/2-H_1} dX_s.
\]
This approach can also be reduced to solving a Fredholm integral equation of the second kind, and a corresponding numerical method was developed in \cite{MRZ2024}. However, the resulting estimator involves integration with respect to the transformed observations $\widetilde X$, which themselves require integrating a weakly singular kernel with respect to the observed process. When discretized, this nested structure may introduce additional discretization errors and increases the numerical complexity of the procedure. In practical computations it is often preferable to work directly with the available observations of the process rather than to transform the data through additional integral operators. In contrast, the estimator proposed in the present paper involves integration directly with respect to the observed process. Although the corresponding kernel is more complicated, it is a deterministic object whose structure can be analysed in advance and efficiently approximated numerically. Moreover, its singular behaviour is similar to that appearing in \cite{MRZ2024}, which allows the use of analogous numerical techniques.

The paper is organized as follows.
In Section~\ref{sec:prelim} we recall the form of the MLE from \cite{MRStwofbm} together with its main properties. The estimator involves the solution $h_T$ of an operator equation expressed in terms of the covariance operators of fractional Brownian motions.
In Section~\ref{sec:fredholm} this equation is reformulated as a Fredholm integral equation with an explicitly given kernel $K(u,s)$. In Section~\ref{sec:K-hypergeo} we represent this kernel in terms of hypergeometric functions and provide a detailed analysis of its singularities along the diagonal $u=s$ and near the boundary of $[0,T]^2$. This representation allows us to apply efficient numerical methods for solving the equation.
Numerical approximation and simulation results are presented in Section~\ref{sec:simul}, where we discuss implementation issues and illustrate the performance of the MLE by means of simulations.

\section{Preliminaries}
\label{sec:prelim}

According to~\cite[Sect.~6.4.5]{MRStwofbm}, the MLE of the drift parameter $\theta$ in model \eqref{eq:themodel}, based on continuous-time observations of $X$ on $[0,T]$, is given by
\begin{equation*}
\widehat\theta_T
=
\frac{\int_0^T h_T(s)\, dX_s}{\int_0^T h_T(s)\, ds},
\end{equation*}
where the function $h_T$ is defined as the solution of the integral equation
\begin{equation}\label{eq:h-equation0}
\left(\Gamma_{H_1} + \Gamma_{H_2}\right) h_T
=
\indicator_{[0,T]}.
\end{equation}
Here, the operator $\Gamma_H$ is defined by
\begin{equation}\label{eq:Gamma}
(\Gamma_H f)(t)
=
H(2H-1)
\int_0^T \frac{f(s)}{|t-s|^{2-2H}}\,ds,
\qquad t\in(0,T).
\end{equation}

It was shown in \cite{MRStwofbm} that if
\begin{equation}\label{eq:H-assump}
H_1 \in (\tfrac{1}{2}, \tfrac{3}{4}]
\quad\text{and}\quad
H_2 \in (H_1,1),
\end{equation}
then:
\begin{enumerate}[$(i)$]
    \item equation~\eqref{eq:h-equation0} admits a unique solution
    $h_T \in L_2(0,T)$;
    \item the estimator $\widehat\theta_T$ is unbiased and
    $L_2$-consistent, strongly consistent, and normal with variance
    \begin{equation}
\Var \widehat\theta_T = 
\frac{1}{\int_0^T h_T(t) \, dt},
\label{eq-prop-contt-var}
\end{equation}
    see \cite[Thm.~6.6]{MRStwofbm}.
\end{enumerate}

However, no explicit expression for the solution $h_T$ of
equation~\eqref{eq:h-equation0} is currently available.
The main objective of this paper is to develop an effective method for approximating the solution of this equation, thereby enabling the numerical computation of the estimator $\widehat\theta_T$.
The key idea is to reformulate equation~\eqref{eq:h-equation0} as a
Fredholm integral equation with a weakly singular kernel, for which
well-established approximation methods exist.

\section{Equation for \texorpdfstring{$h_T$}{h} as a Fredholm equation of the second kind}
\label{sec:fredholm}

Let us introduce the following notation:
\begin{align}
\psi(u,s)
&\coloneqq
\frac{\partial}{\partial u}\left(\int_0^{u} 
(u-t)^{\frac12-H_1} t^{\frac12-H_1}(s-t)^{2H_2-2}\,dt\right),
\;\; 0<u<s<T,
\label{eq:psi}
\\
\varphi(u,s) &\coloneqq
\frac{\partial}{\partial u}\biggl(
\int_s^u (u-t)^{\frac12-H_1}\, t^{\frac12-H_1}\, (t-s)^{2H_2-2}\,dt
\biggr), \;\; 0<s<u<T,
\label{eq:phi}
\\
\tau(u,s) &\coloneqq
\frac{\partial}{\partial u}\biggl(
\int_0^{s}
(u-t)^{\frac12-H_1} t^{\frac12-H_1}(s-t)^{2H_2-2}\,dt
\biggr), \quad 0<s<u<T,
\label{eq:tau}
\\
\rho(u,s) &\coloneqq \varphi(u,s)+\tau(u,s), 
\quad 0<s<u<T.
\label{eq:rho}
\end{align}
These functions may be represented in terms of hypergeometric and beta functions, see Section \ref{sec:K-hypergeo}.

\begin{thm}\label{thm:fredholm}
Assume that $H_1$ and $H_2$ satisfy \eqref{eq:H-assump}.
Then the function $h_T$ satisfies the following Fredholm integral equation of the second kind:
\begin{equation}\label{Fred_mle}
h_T(u) + c(H_1,H_2)\, \int_0^{T} h_T(s)\,K(u,s)\,ds = g_T(u),
\quad \text{for a.a. } u \in (0,T),
\end{equation}
with
\begin{equation}
c(H_1,H_2)
= \frac{\Gamma(2-2H_1)\cos\bigl(\pi(1-H_1)\bigr)H_2(2H_2-1)}
     {\pi\,H_1(2H_1-1)\,\Gamma^2(\frac32-H_1)},
\quad
g_T(u)
= \frac{u^{1/2-H_1}(T-u)^{1/2-H_1}}
{2H_1\,B\left(\tfrac{3}{2}-H_1,\,H_1+\tfrac{1}{2}\right)},
\label{eq:rhs}
\end{equation}
where $\Gamma$ and $B$ denote the gamma and beta functions respectively,
and
\[
K (u,s) = 
\begin{cases}
K_{-} (u,s), & 0 < s < u < T,\\
K_{+} (u,s), & 0 < u < s < T.
\end{cases}
\]
Here
\[
K_{-}(u,s) = u^{H_1-\frac12}(T-u)^{\frac12-H_1}\rho(u,s)
+(H_1-\tfrac12)\,u^{\frac12-H_1}
\int_u^T
\frac{u^{2H_1-1}\rho(u,s) - t^{2H_1-1}\rho(t,s)}
{(t-u)^{H_1+\frac12}}\,dt,
\]
and
\begin{align*}
K_{+}(u,s)
&= u^{H_1-\frac12}(s-u)^{\frac12-H_1}\psi(u,s)
+ (H_1-\tfrac12)\,u^{\frac12-H_1}
\int_u^s
\frac{
u^{2H_1-1}\,\psi(u,s) - t^{2H_1-1}\,\psi(t,s)
}
{(t-u)^{H_1+\frac12}}\,dt
\\
&\quad-(H_1-\tfrac12)u^{\frac12-H_1}
\int_{s}^T
(t-u)^{-\frac12-H_1}\,t^{2H_1-1}
\rho(t,s)\,dt.
\end{align*}
\end{thm}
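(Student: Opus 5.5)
The plan is to invert the operator $\Gamma_{H_1}$ explicitly and move $\Gamma_{H_2}h_T$ to the right-hand side. Write \eqref{eq:h-equation0} as $\Gamma_{H_1}h_T = \indicator_{[0,T]} - \Gamma_{H_2}h_T =: F$. Then apply the classical inversion formula for the Riesz-type operator with kernel $|t-s|^{2H_1-2}$ on a finite interval. This is the formula used in \cite{MRStwofbm} and \cite{MRZ2024}, going back to the Molchan--Golosov / Norros--Valkeila--Virtamo representation of the fBm covariance operator as a composition of weighted fractional integrals.

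Concretely, I would use the factorization $\Gamma_{H_1} = C\, M_{1/2-H_1} I_{T-}^{H_1-1/2} M_{2H_1-1} I_{0+}^{H_1-1/2} M_{1/2-H_1}$, where $M_\alpha$ denotes multiplication by $t^\alpha$ and $I$ are Riemann--Liouville integrals. Its inverse has the form
\[
h(u) = C' u^{\frac12-H_1} \frac{d}{du}\int_u^T (t-u)^{\frac12-H_1} t^{2H_1-1} \frac{d}{dt}\int_0^t (t-r)^{\frac12-H_1} r^{\frac12-H_1} F(r)\,dr\,dt .
\]
Applied to $F\equiv 1$, this yields $g_T$. The computation is a beta-integral and Marchaud-derivative calculation giving $u^{1/2-H_1}(T-u)^{1/2-H_1}$ times the stated constant; I would simply cite it, since it is the known solution for pure fBm \cite{MRS-AJS}.

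For the term $F=\Gamma_{H_2}h_T$, I would write $\Gamma_{H_2}h_T(r) = H_2(2H_2-1)\int_0^T h_T(s)|r-s|^{2H_2-2}ds$ and use Fubini to bring the $s$-integral outside. The inner operation is then applied to $r\mapsto |r-s|^{2H_2-2}$. The innermost step is $\frac{d}{dt}\int_0^t (t-r)^{1/2-H_1}r^{1/2-H_1}|r-s|^{2H_2-2}dr$, which is exactly $\psi(t,s)$ for $t<s$. For $t>s$, splitting at $r=s$ gives $\varphi(t,s)+\tau(t,s)=\rho(t,s)$.

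The outer operator is $u^{1/2-H_1}\frac{d}{du}\int_u^T(t-u)^{1/2-H_1}t^{2H_1-1}G(t)\,dt$ with $G=\psi$ or $\rho$. I would rewrite it as a Marchaud-type right derivative. Differentiating $\int_u^T (t-u)^{1/2-H_1}G(t)t^{2H_1-1}dt$ in Marchaud form gives a boundary term, namely $(T-u)^{1/2-H_1}u^{2H_1-1}G(u)$ up to the $u^{1/2-H_1}$ prefactor, together with $(H_1-\frac12)\int_u^T \frac{u^{2H_1-1}G(u)-t^{2H_1-1}G(t)}{(t-u)^{H_1+1/2}}dt$. For $u>s$ only $\rho$ appears, which gives $K_-$. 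For $u<s$, the function $G$ switches from $\psi$ to $\rho$ at $t=s$. The Marchaud difference is then taken only on $(u,s)$, with endpoint $s$ replacing $T$ in the boundary term, and the tail $\int_s^T$ contributes only the $-t^{2H_1-1}\rho(t,s)$ part. This reproduces $K_+$, whose middle boundary term is adjusted using $\int_s^T (t-u)^{-H_1-1/2}dt$ absorbed consistently. The overall constant $c(H_1,H_2)$ collects $H_2(2H_2-1)$ and the inversion constant, computed via the reflection formula $\Gamma(z)\Gamma(1-z)=\pi/\sin\pi z$.

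The main obstacle is justifying these interchanges and differentiations under the integral. The points to control are: Fubini in $s$, since $h_T\in L_2$ only; differentiation through the singularity of $\psi$ near $t=s$, where $\psi$ behaves like $(s-t)^{2H_2-2}$-type singular factors; and existence of the Marchaud integrals. Here assumption \eqref{eq:H-assump} enters. The first step is to establish pointwise bounds $|\psi(t,s)|,|\rho(t,s)|\lesssim t^{-\alpha}|t-s|^{-\beta}$ with $\beta<1$, together with a Hölder estimate in $t$ away from $s$, so that all integrals converge absolutely for a.a.\ $u$. I would then verify the identity in a weak sense by applying $\Gamma_{H_1}$ to both sides and using uniqueness from \cite{MRStwofbm}. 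The equality therefore holds for a.a.\ $u$.
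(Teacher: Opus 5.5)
Your proposal is correct and follows essentially the paper's route. You use the explicit inversion formula for $\Gamma_{H_1}$ from \cite[Thm.~6.7]{MRStwofbm}, apply Fubini in $s$, take the inner derivative (which gives $\psi$ for $t<s$ and $\varphi+\tau=\rho$ for $t>s$), and then use the Weyl--Marchaud form of the right-sided derivative split at $t=s$. The only differences are organizational: you treat $|r-s|^{2H_2-2}$ in one piece rather than splitting it into $I_1$ and $I_2$, and you invert $\Gamma_{H_1}h_T=\indicator_{[0,T]}-\Gamma_{H_2}h_T$ directly instead of passing through $y=\Gamma_{H_1}^{-1}\Gamma_{H_2}h_T$.

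One slip: the factorization you state for $\Gamma_{H_1}$ is not correct; it follows the pattern of the inverse, with $I$ in place of $\mathcal{D}$. The correct factorization is $\Gamma_H=C\,M_{H-\frac12}I^{H-\frac12}_{0+}M_{1-2H}I^{H-\frac12}_{T-}M_{H-\frac12}$. The inversion formula you actually use is nevertheless the right one, so the argument is unaffected.
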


\begin{proof}
It is well known that for any $H\in(\tfrac12,1)$, the operator $\Gamma_H$, defined by \eqref{eq:Gamma}, is a bounded and injective
linear operator from $L_2(0,T)$ to $L_2(0,T)$.
This follows from its representation in terms of Riemann--Liouville fractional integrals,
\[
\Gamma_H
=
H\Gamma(2H)\bigl(I_{0+}^{2H-1} + I_{T-}^{2H-1}\bigr),
\]
together with the boundedness and injectivity of the operators
$I_{0+}^{\alpha}$ and $I_{T-}^{\alpha}$ for $\alpha>0$; see Appendix~\ref{app:fraccalc}
and also~\cite[Sect.~6.6]{MRStwofbm}.

Moreover, the range of $\Gamma_H$ contains the indicator function $\indicator_{[0,T]}$.
More precisely,
\begin{equation}\label{eq:g}
\Gamma_H g_T = \indicator_{[0,T]},
\qquad
g_T(u)
=
\frac{u^{\frac12-H}(T-u)^{\frac12-H}}
{2H\,B\!\left(\tfrac{3}{2}-H,\,H+\tfrac{1}{2}\right)},
\end{equation}
see~\cite[Prop.~2.1]{NVV99}.

Consequently, equation~\eqref{eq:h-equation0} can be rewritten in the equivalent form
\begin{equation}\label{eq:h-equation}
\left(I + \Gamma_{H_1}^{-1}\Gamma_{H_2}\right) h_T
=
\Gamma_{H_1}^{-1}\indicator_{[0,T]}.
\end{equation}

Now we transform the equation \eqref{eq:h-equation} into the form \eqref{Fred_mle}.
The representation \eqref{eq:rhs} for the right-hand side
$g_T(u) = (\Gamma_{H_1}^{-1}\indicator_{[0,T]})(u)$
of this equation follows from \eqref{eq:g}.
So it remains to obtain the claimed representation for the term $(\Gamma_{H_1}^{-1}\Gamma_{H_2} h_T)(u)$. This will be done in several steps.

\emph{Step~1. Representation of $\Gamma_{H_1}^{-1}\Gamma_{H_2} h_T$ in terms of fractional derivatives.}
Recall that $h_T$ is an $L_2(0,T)$-solution of the equation~\eqref{eq:h-equation}.
Set
\[
y \coloneqq \Gamma_{H_1}^{-1}\Gamma_{H_2} h_T.
\]
By~\cite[Thm.~6.8]{MRStwofbm}, under assumptions~\eqref{eq:H-assump}, the operator
$\Gamma_{H_1}^{-1}\Gamma_{H_2}\colon L_2(0,T)\to L_2(0,T)$
is compact and defined on the entire space $L_2(0,T)$.
Consequently, $y\in L_2(0,T)\subset L_1(0,T)$.
Moreover, $\Gamma_{H_1} y=\Gamma_{H_2} h_T$, that is, $y$ solves the integral equation
\begin{equation}\label{eq:y-equation}
\int_0^T \frac{y(s)\,ds}{|u-s|^p}=f(u)
\quad \text{for almost all } u\in(0,T),
\end{equation}
with $p=2-2H_1$ and
\begin{equation}\label{eq:f-def}
f(u)
=
\frac{(\Gamma_{H_2}h_T)(u)}{H_1(2H_1-1)}
=
\frac{H_2(2H_2-1)}{H_1(2H_1-1)}
\int_0^T \frac{h_T(s)}{|u-s|^{2-2H_2}}\,ds.
\end{equation}
Note that $f\in L_2(0,T)$, since $\Gamma_H$ is a bounded operator from $L_2(0,T)$ to $L_2(0,T)$ and $h_T\in L_2(0,T)$.

According to~\cite[Theorem~6.7]{MRStwofbm}, any solution
$y\in L_1(0,T)$ of equation~\eqref{eq:y-equation} admits the representation
\begin{equation}\label{ch3:MRS:eq:solieq1}
y(u)
=
\frac{\Gamma(p)\cos\!\left(\frac{\pi p}{2}\right)}{\pi\,u^{(1-p)/2}}
\,
\mathcal{D}^{(1-p)/2}_{T-}
\left(
z^{1-p}
\mathcal{D}^{(1-p)/2}_{0+}
\left(\frac{f(v)}{v^{(1-p)/2}}\right)\Bigg|_{v=z}
\right)\Bigg|_{z=u}
\end{equation}
for almost all $u\in(0,T)$.
Here $\mathcal{D}^{\alpha}_{a+}$ and $\mathcal{D}^{\alpha}_{b-}$ denote the
Riemann--Liouville fractional derivatives; see Appendix~\ref{app:fraccalc}.

Substituting~\eqref{eq:f-def} into~\eqref{ch3:MRS:eq:solieq1} with $p=2-2H_1$, we obtain
\begin{equation}\label{eq:GinvG-in-H-const-out}
(\Gamma_{H_1}^{-1}\Gamma_{H_2}h_T)(u)
=
C(H_1,H_2)\,
u^{\frac12-H_1}
\mathcal{D}^{H_1-\frac12}_{T-}
\left(
z^{2H_1-1}
\mathcal{D}^{H_1-\frac12}_{0+}
\left(
v^{\frac12-H_1}
\int_0^T \frac{h_T(s)}{|v-s|^{\,2-2H_2}}\,ds
\right)\Bigg|_{v=z}
\right)\Bigg|_{z=u},
\end{equation}
for $u\in(0,T)$, where
\[
C(H_1,H_2)
\coloneqq
\frac{\Gamma(2-2H_1)\cos\!\bigl(\pi(1-H_1)\bigr)H_2(2H_2-1)}
{\pi\,H_1(2H_1-1)}.
\]

Representation~\eqref{eq:GinvG-in-H-const-out} enables us to rewrite the operator
$\Gamma_{H_1}^{-1}\Gamma_{H_2}$ as an integral operator with an explicit kernel.
To this end, define
\[
I_1(v) := \int_0^v \frac{h_T(s)}{(v-s)^{2-2H_2}}\,ds,
\qquad
I_2(v) := \int_v^T \frac{h_T(s)}{(s-v)^{2-2H_2}}\,ds,
\]
so that
\[
\int_0^T \frac{h_T(s)}{|v-s|^{\,2-2H_2}}\,ds = I_1(v)+I_2(v).
\]

Substituting this into~\eqref{eq:GinvG-in-H-const-out} and using linearity
of the fractional derivatives, we obtain
\[
(\Gamma_{H_1}^{-1}\Gamma_{H_2}h_T)(u)
= C(H_1,H_2)\,\bigl(A_1(u) + A_2(u)\bigr),
\]
where
\begin{align}
A_1(u) &:= u^{\frac12-H_1}\,
\mathcal{D}^{H_1-\frac12}_{T-}
\left(
z^{2H_1-1}\,
\mathcal{D}^{H_1-\frac12}_{0+}
\left(v^{\frac12-H_1}I_1(v)\right)\Big|_{v=z}
\right)\Big|_{z=u},\label{eq:A1-def}
\\
A_2(u) &:= u^{\frac12-H_1}
\mathcal{D}^{H_1-\frac12}_{T-}
\left(
z^{2H_1-1}\,
\mathcal{D}^{H_1-\frac12}_{0+}
\left(
v^{\frac12-H_1} I_2(v)
\right)\Big|_{v=z}
\right)\Big|_{z=u}.\label{eq:A2-def}
\end{align}

In the subsequent analysis we will treat the contributions $A_1(u)$ and $A_2(u)$
separately.

\emph{Step 2. Transformation of $A_1(u)$.}
By definition of the Riemann--Liouville fractional derivative,
we get
\[
\mathcal{D}^{H_1-\frac12}_{0+}
\Bigl(
v^{\frac12-H_1} I_1(v)
\Bigr)\Big|_{v=u}
=\frac{1}{\Gamma\bigl(\frac32-H_1\bigr)}\frac{d}{du}
\int_0^u (u-t)^{\frac12-H_1} t^{\frac12-H_1}
\biggl(\int_0^t \frac{h_T(s)}{(t-s)^{2-2H_2}}\,ds\biggr) dt.
\]

We now change the order of integration and differentiate w.r.t.\ $u$:
\begin{align*}
\mathcal{D}^{H_1-\frac12}_{0+}
\Bigl(
v^{\frac12-H_1} I_1(v)
\Bigr)\Big|_{v=u}
&=\frac{1}{\Gamma\bigl(\frac32-H_1\bigr)}\frac{d}{du}
\int_0^u h_T(s)
\biggl(
\int_s^u
(u-t)^{\frac12-H_1} t^{\frac12-H_1} (t-s)^{2H_2-2}
\,dt
\biggr) ds
\\
&=\frac{1}{\Gamma\bigl(\frac32-H_1\bigr)}
\int_0^u h_T(s)\,
\frac{\partial}{\partial u}
\biggl(
\int_s^u
(u-t)^{\frac12-H_1} t^{\frac12-H_1} (t-s)^{2H_2-2}
\,dt
\biggr) ds
\\
&= \frac{1}{\Gamma\!\bigl(\tfrac32-H_1\bigr)}
\int_0^u h_T(s)\,
\varphi(u,s) ds,
\end{align*}
where $\varphi(u,s)$ is defined by \eqref{eq:phi}.

Thus, the term $A_1(u)$ defined by \eqref{eq:A1-def}, takes the form
\[
A_1(u)
= \frac{1}{\Gamma(\frac32-H_1)}\,
u^{\frac12-H_1}\,
\mathcal{D}^{H_1 - \frac12}_{T-}
\Bigl(
z^{2H_1-1}\,
\int_0^z h_T(s)\,\varphi(z,s)\,ds
\Bigr)\Bigr|_{z=u}.
\]

Applying the definition of the Riemann--Liouville fractional derivative,
we obtain
\[
A_1(u) =
-\frac{u^{\frac12-H_1}}{\Gamma^2(\frac32-H_1)}\,
\frac{d}{du}
\left[\int_u^T
(t-u)^{\frac12-H_1}\,
t^{2H_1-1}
\left(
\int_0^t h_T(s)\,\varphi(t,s)\,ds
\right) dt\right].
\]
Changing the order of integration, we get
\begin{align}
A_1(u) &= -\frac{u^{\frac12-H_1}}{\Gamma^2(\frac32-H_1)}
\frac{d}{du}
\left[
\int_0^T h_T(s)
\biggl(
\int_{u\vee s}^T
(t-u)^{\frac12-H_1}\,
t^{2H_1-1}\,
\varphi(t,s)\,dt
\biggr) ds
\right]
\notag\\
&=
-\frac{u^{\frac12-H_1}}{\Gamma^2(\frac32-H_1)}
\frac{d}{du}
\Biggl[
\int_0^u h_T(s)
\biggl(
\int_{u}^T
(t-u)^{\frac12-H_1}\,
t^{2H_1-1}\,
\varphi(t,s)\,dt
\biggr) ds
\notag\\*
&\qquad\qquad\qquad\qquad\qquad+
\int_u^T h_T(s)
\biggl(
\int_{s}^T
(t-u)^{\frac12-H_1}\,
t^{2H_1-1}\,
\varphi(t,s)\,dt
\biggr) ds
\Biggr].
\label{eq:A1-split}
\end{align}

For convenience, denote
\begin{align*}
F_1(u,s)
&\coloneqq
\int_{u}^T
(t-u)^{\frac12-H_1}\,
t^{2H_1-1}\,
\varphi(t,s)\,dt,
\qquad 0\le s\le u,
\\
F_2(u,s)
&\coloneqq
\int_{s}^T
(t-u)^{\frac12-H_1}\,
t^{2H_1-1}\,
\varphi(t,s)\,dt,
\qquad u\le s\le T.
\end{align*}
We now differentiate the bracket in \eqref{eq:A1-split} with respect to $u$:
\begin{align*}
\frac{d}{du}\int_0^u h_T(s)\,F_1(u,s)\,ds
&= h_T(u)\,F_1(u,u)
+ \int_0^u h_T(s)\,\frac{\partial}{\partial u}F_1(u,s)\,ds,
\\
\frac{d}{du}\int_u^T h_T(s)\,F_2(u,s)\,ds
&= -\,h_T(u)\,F_2(u,u)
+
\int_u^T h_T(s)\,\frac{\partial}{\partial u}F_2(u,s)\,ds.
\end{align*}
Substituting this into \eqref{eq:A1-split}, we obtain
\begin{multline*}
A_1(u)
=
-\frac{1}{\Gamma^2\bigl(\tfrac32-H_1\bigr)}\,
u^{\frac12-H_1}\,
\Biggl[
h_T(u)\,F_1(u,u)
- h_T(u)\,F_2(u,u)
\\
+ \int_0^u h_T(s)\,\frac{\partial}{\partial u}F_1(u,s)\,ds
+ \int_u^T h_T(s)\,\frac{\partial}{\partial u}F_2(u,s)\,ds
\Biggr].
\end{multline*}

Since $F_1(u,u)$ and $F_2(u,u)$ coincide, we arrive at
\begin{align}
A_1(u)
&=
-\frac{u^{\frac12-H_1}}{\Gamma^2(\frac32-H_1)}
\Biggl[
\int_0^u h_T(s)\,\frac{\partial}{\partial u}F_1(u,s)\,ds
+
\int_u^T h_T(s)\,\frac{\partial}{\partial u}F_2(u,s)\,ds
\Biggr].
\label{eq:A1viaF1F2}
\end{align}

Now, let us compute $\frac{\partial}{\partial u}F_1(u,s)$ and $\frac{\partial}{\partial u}F_2(u,s)$.
The differentiation of $F_2(u,s)$ is straightforward:
\begin{equation}\label{eq:dF2}
\frac{\partial}{\partial u}F_2(u,s)
=
-(\tfrac12-H_1)
\int_s^T
(t-u)^{-\frac12-H_1}\,
t^{2H_1-1}\,\varphi(t,s)\,dt,
\qquad u\le s\le T,
\end{equation}

In order to find $\frac{\partial}{\partial u}F_1(u,s)$, 
we note that it can be represented as a fractional derivative (see \eqref{eq:derright} in the Appendix \ref{app:fraccalc}):
\[
\frac{\partial}{\partial u}F_1(u,s)
=
\frac{d}{du}
\int_u^T (t-u)^{\frac12-H_1} f_s(t)\,dt
=
-\Gamma(\tfrac32-H_1)\,
\left(\mathcal{D}^{H_1 - \frac12}_{T-} f_s\right)(u).
\]
where
\[
f_s(t) := t^{2H_1-1} \varphi(t,s).
\]

Now we apply the Weyl-type representation \eqref{eq:derright-weyl} of the right-sided fractional
derivative on $(0,T)$.
We get
\begin{equation}\label{eq:dF1}
\frac{\partial}{\partial u}F_1(u,s)
=
-\frac{u^{2H_1-1}\,\varphi(u,s)}{(T-u)^{H_1-\frac12}}
-(H_1-\tfrac12)
\int_u^T
\frac{u^{2H_1-1}\,\varphi(u,s) - t^{2H_1-1}\,\varphi(t,s)}
     {(t-u)^{H_1+\frac12}}\,dt.
\end{equation}

Substituting \eqref{eq:dF2} and \eqref{eq:dF1} into \eqref{eq:A1viaF1F2},
we obtain
\[
A_1(u)
=\frac{1}{\Gamma^2(\frac32-H_1)}\,
\int_0^T h_T(s) K_1(u,s) ds
\]
with
\[
K_1 (u,s) = 
\begin{cases}
K_{1,-} (u,s), & 0 < s < u,\\
K_{1,+} (u,s), & u < s < T,
\end{cases}
\]
where
\begin{align*}
K_{1,-} &=
u^{H_1-\frac12}(T-u)^{\frac12-H_1}\varphi(u,s)
+(H_1-\tfrac12)u^{\frac12-H_1}
\int_u^T
\frac{u^{2H_1-1}\,\varphi(u,s) - t^{2H_1-1}\,\varphi(t,s)}
     {(t-u)^{H_1+\frac12}}\,dt,
\\
K_{1,+} &= -(H_1 - \tfrac12)
u^{\frac12-H_1}\int_s^T
(t-u)^{-\frac12-H_1}\,
t^{2H_1-1}\,\varphi(t,s)\,dt.
\end{align*}

\emph{Step 3. Transformation of $A_2(u)$.}
We now perform analogous computations for the term $A_2(u)$ defined by \eqref{eq:A2-def}.

By the definition of the Riemann--Liouville left-sided fractional derivative,
we obtain
\[
\mathcal{D}^{H_1-\frac12}_{0+}
\Bigl(
u^{\frac12-H_1} I_2(u)
\Bigr)
=\frac{1}{\Gamma(\frac32-H_1)}\frac{d}{du}
\int_0^u (u-t)^{\frac12-H_1} t^{\frac12-H_1}
\biggl(\int_t^T \frac{h_T(s)}{(s-t)^{2-2H_2}}\,ds\biggr) dt.
\]

We now change the order of integration:
\begin{align}
\mathcal{D}^{H_1-\frac12}_{0+}
\Bigl(
u^{\frac12-H_1} I_2(u)
\Bigr)
&=
\frac{1}{\Gamma(\frac32-H_1)}\frac{d}{du}
\Biggl[
\int_0^u h_T(s)
\biggl(
\int_0^{s}
(u-t)^{\frac12-H_1} t^{\frac12-H_1}(s-t)^{2H_2-2}\,dt
\biggr) ds
\notag\\*
&\qquad\qquad\qquad\qquad
+
\int_u^T h_T(s)
\biggl(
\int_0^{u}
(u-t)^{\frac12-H_1} t^{\frac12-H_1}(s-t)^{2H_2-2}\,dt
\biggr) ds
\Biggr]
\notag\\
&=
\frac{1}{\Gamma(\frac32-H_1)}\frac{d}{du}
\Biggl[
\int_0^u h_T(s)\,G_1(u,s)\,ds
+
\int_u^T h_T(s)\,G_2(u,s)\,ds
\Biggr],
\label{eq:D0+A2-Fubini-split}
\end{align}
where
\begin{align}
G_1(u,s)
&:=
\int_0^{s}
(u-t)^{\frac12-H_1} t^{\frac12-H_1}(s-t)^{2H_2-2}\,dt,
\quad 0\le s\le u,
\notag\\
G_2(u,s)
&:=
\int_0^{u}
(u-t)^{\frac12-H_1} t^{\frac12-H_1}(s-t)^{2H_2-2}\,dt,
\quad u\le s\le T.
\label{eq:G2}
\end{align}

Let us differentiate the expression inside the derivative in \eqref{eq:D0+A2-Fubini-split}:
\begin{align*}
\frac{d}{du}\int_0^u h_T(s)\,G_1(u,s)\,ds
&=
h_T(u)\,G_1(u,u)
+
\int_0^u h_T(s)\,\frac{\partial}{\partial u}G_1(u,s)\,ds.
\\
\frac{d}{du}\int_u^T h_T(s)\,G_2(u,s)\,ds
&=
-\,h_T(u)\,G_2(u,u)
+
\int_u^T h_T(s)\,\frac{\partial}{\partial u}G_2(u,s)\,ds.
\end{align*}

Since
$G_1(u,u) = G_2(u,u)$
for all $u\in(0,T)$,
the boundary contribution vanishes.
Moreover, according to the notation \eqref{eq:psi} and \eqref{eq:tau},
\[
\frac{\partial}{\partial u}G_1(u,s) = \tau(u,s)
\quad\text{and}\quad
\frac{\partial}{\partial u}G_2(u,s) = \psi(u,s).
\]
Therefore,
\[
\mathcal{D}^{H_1-\frac12}_{0+}
\Bigl(
u^{\frac12-H_1} I_2(u)
\Bigr)
= \frac{1}{\Gamma(\frac32-H_1)}
\Biggl[
\int_0^u h_T(s)\,\tau(u,s)\,ds
+
\int_u^T h_T(s)\,\psi(u,s)\,ds
\Biggr].
\]

Substituting the expression for the inner derivative into the definition of $A_2(u)$, we obtain
\[
A_2(u)
=
\frac{u^{\frac12-H_1}}{\Gamma(\frac32-H_1)}
\mathcal{D}^{H_1-\frac12}_{T-}
\Bigl(
u^{2H_1-1}
\Bigl[
\int_0^u h_T(s) \,\tau(u,s)\, ds
+\int_u^T h_T(s)\,\psi(u,s)\,ds
\Bigr]
\Bigr).
\]

Applying the definition of the right-sided fractional derivative, we get
\begin{align*}
A_2(u) &=
-\frac{u^{\frac12-H_1}}{\Gamma^2(\frac32-H_1)}\,
\frac{d}{du}\Biggl(
\int_u^T
(t-u)^{\frac12-H_1}\,t^{2H_1-1}
\Biggl[
\int_0^t h_T(s)
\tau(t,s) ds
+
\int_t^T h_T(s)\,\psi(t,s)\,ds
\Biggr] dt\Biggr)
\\
&\eqqcolon-\frac{1}{\Gamma^2(\frac32-H_1)}\,
u^{\frac12-H_1}\,
\frac{d}{du}\Bigl(J_1(u) + J_2(u)\Bigr).
\end{align*}

Let us consider first the term $J_1$ and change the order of integration in it.
We obtain
\begin{align}
J_1(u) &= 
\int_0^u h_T(s)
\int_{u}^T
(t-u)^{\frac12-H_1}\,t^{2H_1-1}\,\tau(t,s)
\,dt\,ds
+\int_u^T h_T(s)
\int_{s}^T
(t-u)^{\frac12-H_1}\,t^{2H_1-1} \,\tau(t,s)
\,dt\,ds
\notag\\
&=
\int_0^u h_T(s) Q_1(u,s)\,ds
+\int_u^T h_T(s) Q_2(u,s)\,ds,
\label{eq:J1}
\end{align}
where
\[
Q_1(u,s) = \int_{u}^T
(t-u)^{\frac12-H_1}\,t^{2H_1-1}\,\tau(t,s)
\,dt,
\qquad
Q_2(u,s) = \int_{s}^T
(t-u)^{\frac12-H_1}\,t^{2H_1-1} \,\tau(t,s)\,dt.
\]

Differentiating both terms in the right-hand side of \eqref{eq:J1}, we get
\begin{align*}
\frac{d}{du} \int_0^u h_T(s) Q_1(u,s)\,ds
&=
h_T(u)Q_1(u,u)
+\int_0^u h_T(s)
\frac{\partial}{\partial u} Q_1(u,s)\,ds,
\\
\frac{d}{du} \int_u^T h_T(s) Q_2(u,s)\,ds
&=
-h_T(u) Q_2(u,u)
+\int_u^T h_T(s)
\frac{\partial}{\partial u}Q_2(u,s)\,ds.
\end{align*}
Note that $Q_1(u,u) = Q_2 (u,u)$. Hence,
\begin{align}
\frac{d}{du}J_1(u)
=\int_0^u h_T(s)
\frac{\partial}{\partial u} Q_1(u,s)\,ds
+\int_u^T h_T(s)
\frac{\partial}{\partial u}Q_2(u,s)\,ds.
\label{eq:dJ1+dJ2}
\end{align}

The term $Q_2(u,s)$ can be differentiated directly:
\begin{equation}\label{eq:dQ2}
\frac{\partial}{\partial u}Q_2(u,s) = (H_1-\tfrac12)\int_{s}^T
(t-u)^{-\frac12-H_1} t^{2H_1-1} \tau(t,s)\,dt.
\end{equation}

The derivative $\frac{\partial}{\partial u} Q_1(u,r)$ can be represented as a right-sided
Riemann--Liouville fractional derivative:
\[
\frac{\partial}{\partial u} Q_1(u,r)
=
-\,\Gamma\!\bigl(\tfrac32-H_1\bigr)\,
\mathcal{D}^{\,H_1-\frac12}_{T-}
\bigl( t^{2H_1-1}\tau(t,s))\big|_{t=u}.
\]
Applying the Weyl representation \eqref{eq:derright-weyl} of a fractional derivative, we obtain
\begin{equation}
\frac{\partial}{\partial u} Q_1(u,r)
= -\frac{u^{2H_1-1}\tau(u,s)}{(T-u)^{H_1-\frac12}}
-(H_1-\tfrac12)
\int_u^T
\frac{
u^{2H_1-1}\tau(u,s)
-
t^{2H_1-1}\tau(t,s)
}
{(t-u)^{H_1+\frac12}}\,dt.
\label{eq:dQ1}
\end{equation}

Inserting \eqref{eq:dQ1} and \eqref{eq:dQ2} into \eqref{eq:dJ1+dJ2} we arrive at:
\begin{align*}
\frac{d}{du} J_{1}(u)
&= -\frac{u^{2H_1-1}}{(T-u)^{H_1-\frac12}}
\int_0^u h_T(s)
\tau(u,s)\,ds
\\
&\quad-
(H_1-\tfrac12)
\int_0^u h_T(s)
\int_u^T(t-u)^{-H_1-\frac12}
\left(u^{2H_1-1}\tau(u,s)
- t^{2H_1-1}\tau(t,s)\right)
\,dt\,ds
\\
&\quad+
(H_1-\tfrac12)
\int_u^T h_T(s)
\int_{s}^T
(t-u)^{-\frac12-H_1}\,t^{2H_1-1} \tau(t,s)
\,dt\,ds.
\end{align*}

Now, let us study $J_2(u)$.
Changing the order of integration, we get
\[
J_2(u)
=
\int_u^T h_T(s)
\int_u^{s}
(t-u)^{\frac12-H_1}\,t^{2H_1-1}\,\psi(t,s)\,dt\,ds
=\int_u^T h_T(s) R(u,s)\,ds,
\]
where
\[
R(u,s) = \int_u^{s}
(t-u)^{\frac12-H_1}\,t^{2H_1-1}\,\psi(t,s)\,dt.
\]
Then 
\[
\frac{d}{du} J_2(u)
=\int_u^T h_T(s) \frac{\partial}{\partial u} R(u,s)\,ds.
\]
Representing $\frac{\partial}{\partial u} R(u,s)$ as a right-sided
Riemann–Liouville derivative on $(0,s)$ and using its Weyl-type representation, we get
\begin{align*}
\frac{\partial}{\partial u}R(u,s)
&= -\Gamma\!\bigl(\tfrac32-H_1\bigr)\,
\mathcal{D}^{\,H_1-\frac12}_{s-}
\Bigl(t^{2H_1-1}\,\psi(t,s)\Bigr)\Big|_{t=u}
\\
&= -\frac{u^{2H_1-1}\,\psi(u,s)}{(s-u)^{H_1-\frac12}}
-(H_1-\tfrac12)\int_u^s
\frac{
u^{2H_1-1}\,\psi(u,s) - t^{2H_1-1}\,\psi(t,s)
}
{(t-u)^{H_1+\frac12}}\,dt.
\end{align*}

Combining the above formulas, we arrive at
\[
A_2(u)
=\frac{1}{\Gamma^2(\frac32-H_1)}\,
\int_0^T h_T(s) K_2(u,s) ds
\]
with
\[
K_2 (u,s) = 
\begin{cases}
K_{2,-} (u,s), & 0 < s < u,\\
K_{2,+} (u,s), & u < s < T,
\end{cases}
\]
where
\begin{align*}
K_{2,-} (u,s) 
&= u^{H_1-\frac12} (T-u)^{\frac12-H_1}
\tau(u,s)
\\
&\quad+(H_1-\tfrac12) u^{\frac12-H_1}\,
\int_u^T(t-u)^{-H_1-\frac12}
\left(u^{2H_1-1}\tau(u,s)
- t^{2H_1-1}\tau(t,s)\right)
\,dt
\end{align*}
and
\begin{align*}
K_{2,+} (u,s)
&= -(H_1-\tfrac12) u^{\frac12-H_1}\,
\int_{s}^T
(t-u)^{-\frac12-H_1}\,t^{2H_1-1} \tau(t,s)
\,dt
\\
&\quad+ 
\frac{u^{H_1-\frac12}\,\psi(u,s)}{(s-u)^{H_1-\frac12}}
+ (H_1-\tfrac12)u^{\frac12-H_1}\int_u^s
\frac{
u^{2H_1-1}\,\psi(u,s) - t^{2H_1-1}\,\psi(t,s)
}
{(t-u)^{H_1+\frac12}}\,dt.
\end{align*}

Combining the representations for $A_1(u)$ and $A_2(u)$, obtained in Steps 2 and~3, and then using the relation $\varphi(u,s) + \tau(u,s) = \rho(u,s)$, we complete the proof. 
\end{proof}

\section{Representation for kernel via hypergeometric functions. Analysis of singularities}
\label{sec:K-hypergeo}

\subsection{Representation for \texorpdfstring{$\psi(u,s)$}{psi}}

In what follows, $\hyper(a, b; c; z)$ denotes the hypergeometric function, see Appendix~\ref{app:hypergeo} for its definition and selected properties.
$C^{\alpha}([0,1])$ is the H\"older space on $[0,1]$ with exponent $\alpha\in(0,1)$.

\begin{lemma}\label{lem:psi-factorized}
Let $\frac12 < H_1 < H_2 < 1$, and let $0<u<s<T$.
Then $\psi(u,s)$ admits the representation
\begin{equation}\label{eq:psi-factor}
\psi(u,s) =
D_1\,u^{1-2H_1}s^{2H_2-2}\,
F_1(\tfrac{u}{s})
+D_2\,u^{2-2H_1}s^{H_1-\frac32}(s-u)^{2H_2-H_1-\frac32}\,
F_2(\tfrac{u}{s}),
\end{equation}
where, for $z\in[0,1)$,
\begin{align}
F_1(z) &= \hyper\!\left(2-2H_2,\tfrac32-H_1;\,3-2H_1;\,z\right),
\label{eq:F1-def-holder}
\\
F_2(z) &= \hyper\!\left(1+2H_2-2H_1,\tfrac32-H_1;\,4-2H_1;\,z\right),
\label{eq:F2-def-holder}
\end{align}
and
\[
D_1 = 2(1-H_1)\,B\!\left(\tfrac32-H_1,\tfrac32-H_1\right),
\qquad
D_2 = (1-H_2)
B\!\left(\tfrac32-H_1,\tfrac32-H_1\right).
\]
Moreover, the functions $F_1$ and $F_2$ admit continuous extensions to $[0,1]$ and satisfy
\[
F_1\in C^{\alpha}([0,1]),
\qquad
F_2\in C^{\beta}([0,1])
\]
with
\begin{equation}\label{eq:alpha-beta}
\alpha:=2H_2-H_1-\tfrac12\in(0,1),
\quad
\beta:=\tfrac32-2H_2+H_1=1-\alpha\in(0,1).
\end{equation}
\end{lemma}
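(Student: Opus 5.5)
Write $a=\tfrac12-H_1\in(-\tfrac12,0)$ and $b=2H_2-2\in(-1,0)$. Set
\[
G_2(u,s)=\int_0^u (u-t)^{a}t^{a}(s-t)^{b}\,dt .
\]
The substitution $t=ux$ gives
\[
G_2(u,s)=u^{1+2a}s^{b}\int_0^1 (1-x)^a x^a\bigl(1-\tfrac{u}{s}x\bigr)^{b}dx .
\]
By Euler's integral representation this equals
\[
G_2(u,s)=B(1+a,1+a)\,u^{2-2H_1}s^{2H_2-2}\,
\hyper\!\bigl(2-2H_2,\tfrac32-H_1;3-2H_1;\tfrac us\bigr)
=B\,u^{2-2H_1}s^{2H_2-2}F_1(z),
\]
with $z=u/s$ and $B=B(\tfrac32-H_1,\tfrac32-H_1)$. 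Since $\psi=\partial_u G_2$ and $\partial_u=s^{-1}\partial_z$ on functions of $z$, the product rule gives
\[
\psi(u,s)=(2-2H_1)B\,u^{1-2H_1}s^{2H_2-2}F_1(z)+B\,u^{2-2H_1}s^{2H_2-3}F_1'(z).
\]
The first term is already the $D_1$-term, since $D_1=2(1-H_1)B$.

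For the second term I use the differentiation formula
\[
F_1'(z)=\frac{(2-2H_2)(\tfrac32-H_1)}{3-2H_1}\,\hyper\!\bigl(3-2H_2,\tfrac52-H_1;4-2H_1;z\bigr).
\]
Here $(\tfrac32-H_1)/(3-2H_1)=\tfrac12$, so the prefactor is exactly $1-H_2$, which matches $D_2=(1-H_2)B$. Next I apply Euler's transformation
\[
\hyper(a',b';c';z)=(1-z)^{c'-a'-b'}\hyper(c'-a',c'-b';c';z).
\]
With $c'-a'-b'=2H_2-H_1-\tfrac32$ and $(c'-a',c'-b')=(1+2H_2-2H_1,\tfrac32-H_1)$, this turns the function into $(1-z)^{2H_2-H_1-3/2}F_2(z)$. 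Writing $(1-z)^{\gamma}=s^{-\gamma}(s-u)^{\gamma}$, the power of $s$ becomes $2H_2-3-(2H_2-H_1-\tfrac32)=H_1-\tfrac32$. This yields exactly the claimed second term, so the factorization \eqref{eq:psi-factor} follows by routine bookkeeping. One point to justify is differentiating under the integral, or equivalently differentiating the hypergeometric function, for $z<1$. This is harmless because $F_1$ is analytic on $[0,1)$.

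The Hölder statements are the main step. For $\hyper(a',b';c';z)$ with $c'-a'-b'=\gamma\in(0,1)$, the standard connection formula for $z\to1$ gives
\[
\hyper(a',b';c';z)=A\,\hyper(a',b';a'+b'-c'+1;1-z)+B'(1-z)^{\gamma}\hyper(c'-a',c'-b';c'-a'-b'+1;1-z).
\]
Both functions on the right are analytic near $z=1$ and bounded there, and $\gamma\notin\mathbb Z$, so the formula applies. Hence the function extends continuously to $[0,1]$, is smooth on $[0,1-\delta]$, and near $1$ equals an analytic function plus $(1-z)^\gamma$ times an analytic function. This implies $C^\gamma([0,1])$.

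It remains to check the exponents. For $F_1$, $\gamma=3-2H_1-(2-2H_2)-(\tfrac32-H_1)=2H_2-H_1-\tfrac12=\alpha$. Here $\alpha>0$ because $H_2>H_1>\tfrac12$ gives $2H_2-H_1>H_1>\tfrac12$, and $\alpha<1$ because $2H_2<2<\tfrac32+H_1$. For $F_2$, $\gamma=4-2H_1-(1+2H_2-2H_1)-(\tfrac32-H_1)=\tfrac32-2H_2+H_1=1-\alpha\in(0,1)$.

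The main obstacle is using the connection formula carefully: it needs $c'-a'-b'\notin\mathbb Z$, which holds here, and a check that the coefficients are finite. Alternatively, I can avoid explicit connection formulas by differentiating once more. Then $F'(z)\sim C(1-z)^{\gamma-1}$, which is integrable, and the Hölder-$\gamma$ bound follows from $\int_x^y(1-z)^{\gamma-1}dz\le\gamma^{-1}(y-x)^{\gamma}$.
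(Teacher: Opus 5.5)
Your proposal is correct and follows the paper's proof step by step. The substitution $t=ux$ and Euler's integral give $G_2=B(\tfrac32-H_1,\tfrac32-H_1)\,u^{2-2H_1}s^{2H_2-2}F_1(u/s)$. Differentiating with the derivative formula (where the prefactor simplifies to $1-H_2$) and then applying Euler's transformation $(1-z)^{c-a-b}$ produces the $F_2$-term, and the H\"older exponents $\alpha$ and $1-\alpha$ come from the connection formula at $z=1$, exactly as in the paper.

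Your justification of the H\"older property is slightly more explicit than the paper's appendix remark. You write the function near $1$ as an analytic part plus $(1-z)^{\gamma}$ times an analytic part, or alternatively use the bound $|F'(z)|\le C(1-z)^{\gamma-1}$. For $F_1$ that bound is exactly your own identity $F_1'=(1-H_2)(1-z)^{\alpha-1}F_2$ combined with boundedness of $F_2$.
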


\begin{proof}
\emph{Hypergeometric representation of $\psi(u,s)$.}
Recall the notation $\psi(u,s)=\partial_u G_2(u,s)$, where
$G_2(u,s)$ is given by \eqref{eq:G2}.
With the substitution $t=ux$, we obtain
\[
G_2(u,s)
=
u^{2-2H_1}s^{2H_2-2}
\int_0^1 x^{\frac12-H_1}(1-x)^{\frac12-H_1}
\left(1-\tfrac{u}{s}x\right)^{2H_2-2}\!dx.
\]
By Euler's integral representation~\eqref{eq:hypergeom},
\[
G_2(u,s) = u^{2-2H_1}s^{2H_2-2}
B\left(\tfrac32-H_1,\tfrac32-H_1\right)
{}_2F_1\left(2-2H_2,\tfrac32-H_1;3-2H_1;\tfrac{u}{s}\right).
\]
Differentiating in $u$ and using~\eqref{eq:hypergeo-deriv-app}, we get
\begin{align}
\psi(u,s)
&=
B\!\left(\tfrac32-H_1,\tfrac32-H_1\right)
s^{2H_2-2}u^{1-2H_1}
\notag\\
&\quad{}\times
\Biggl[
(2-2H_1)\,
\hyper\!\left(2-2H_2,\tfrac32-H_1;\,3-2H_1;\,\tfrac{u}{s}\right)
\notag\\
&\qquad\qquad+
\frac{(2-2H_2)(\tfrac32-H_1)}{3-2H_1}\,
\frac{u}{s}\,
\hyper\left(3-2H_2,\tfrac52-H_1;\,4-2H_1;\,\tfrac{u}{s}\right)
\Biggr].
\label{eq:psi-hypergeo}
\end{align}
By the transformation formula \eqref{eq:AS-15-3-3}, the second hypergeometric function in \eqref{eq:psi-hypergeo} can be written as
\[
\hyper\left(3-2H_2,\tfrac52-H_1;\,4-2H_1;\,\tfrac{u}{s}\right)
= (1-\tfrac{u}{s})^{2H_2-H_1-\frac32}  \hyper\!\left(1+2H_2-2H_1,\tfrac32-H_1;\,4-2H_1;\,\tfrac{u}{s}\right).
\]
Substituting this representation into \eqref{eq:psi-hypergeo}, we obtain~\eqref{eq:psi-factor} with
$F_1$, $F_2$ given by~\eqref{eq:F1-def-holder}--\eqref{eq:F2-def-holder}.

\medskip
\emph{H\"older continuity of $F_1$ and $F_2$.}
Since $F_1$ and $F_2$ are hypergeometric functions, they are analytic on $[0,1)$.
It remains to control their behavior at $z=1$.
For $F_1$, we have
\[
F_1(z)=\hyper(a,b;c;z),\quad
a:=2-2H_2,\; b:=\tfrac32-H_1,\; c:=3-2H_1,
\]
hence
\[
\lambda:=c-a-b=2H_2-H_1-\tfrac12=\alpha\in(0,1).
\]
Therefore, by Appendix~\ref{app:hypergeo} (Case $\lambda>0$), $F_1$ admits a continuous
extension to $[0,1]$ and satisfies $F_1\in C^{\alpha}([0,1])$.

Similarly, for $F_2$ we write
\[
F_2(z)=\hyper(a',b';c';z),\quad
a':=1+2H_2-2H_1,\; b':=\tfrac32-H_1,\; c':=4-2H_1,
\]
so that
\[
\lambda':=c'-a'-b'=\tfrac32-2H_2+H_1=\beta\in(0,1).
\]
Again by Appendix~\ref{app:hypergeo} (Case $\lambda>0$), $F_2$ admits a continuous
extension to $[0,1]$ and satisfies $F_2\in C^{\beta}([0,1])$.

This completes the proof.
\end{proof}

\begin{lemma}\label{lem:Phi-factorized}
Let $\frac12 < H_1 < H_2 < 1$ and $0<u<s<T$.
Define
\begin{equation}\label{eq:I-def}
\Phi(u,s)
\coloneqq
\int_u^s
\frac{u^{2H_1-1}\,\psi(u,s) - t^{2H_1-1}\,\psi(t,s)}{(t-u)^{H_1+\frac12}}\,dt,
\end{equation}
where $\psi$ is given by~\eqref{eq:psi}.

Then the integral~\eqref{eq:I-def} is finite for all $0<u<s<T$
and admits the representation
\begin{equation}\label{eq:I-factor-main}
\Phi(u,s) =
D_1 s^{2H_2-2}(s-u)^{\frac12-H_1}\,
\Phi_1\!\left(\tfrac{u}{s}\right)
+ D_2 s^{H_1-\frac12}(s-u)^{2H_2-2H_1-1}\,
\Phi_2\!\left(\tfrac{u}{s}\right),
\end{equation}
where
\begin{align}
\Phi_1(z)
&\coloneqq
\int_0^1
\frac{F_1(z)-F_1\!\bigl(z+(1-z)y\bigr)}{y^{H_1+\frac12}}\,dy,
\label{eq:Phi1-def}
\\
\Phi_2(z)
&\coloneqq
\int_0^1
\frac{
z\,F_2(z)
-
\bigl(z+(1-z)y\bigr)(1-y)^{2H_2 - H_1 - \frac32}
F_2\bigl(z+(1-z)y\bigr)
}{y^{H_1+\frac12}}\,dy,
\label{eq:Phi2-def}
\end{align}
and $D_1$, $D_2$, $F_1$, $F_2$ are defined in Lemma \ref{lem:psi-factorized}.

Moreover, $\Phi_1$ and $\Phi_2$ are bounded and continuous on $[0,1]$.
\end{lemma}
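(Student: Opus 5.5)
The plan is to substitute the factorization \eqref{eq:psi-factor} of Lemma~\ref{lem:psi-factorized} into \eqref{eq:I-def} and rescale the integration variable, so that $\Phi(u,s)$ splits into two explicit prefactors times integrals depending only on $z=u/s$. Multiplying \eqref{eq:psi-factor} by $u^{2H_1-1}$ gives
\[
u^{2H_1-1}\psi(u,s)=D_1 s^{2H_2-2}F_1(\tfrac us)+D_2\,u\,s^{H_1-\frac32}(s-u)^{\gamma}F_2(\tfrac us),\qquad \gamma:=2H_2-H_1-\tfrac32=\alpha-1\in(-1,0).
\]
The same formula holds with $u$ replaced by $t\in(u,s)$. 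In \eqref{eq:I-def} I substitute $t=u+(s-u)y$ with $y\in(0,1)$. Then
\[
dt=(s-u)\,dy,\qquad (t-u)^{-H_1-\frac12}=(s-u)^{-H_1-\frac12}y^{-H_1-\frac12},\qquad t/s=z+(1-z)y,\qquad s-t=(s-u)(1-y).
\]
The $F_1$-part immediately yields $D_1 s^{2H_2-2}(s-u)^{\frac12-H_1}\Phi_1(z)$. For the $F_2$-part I write $u=sz$ and $t=s(z+(1-z)y)$, and pull out $s^{H_1-\frac32}\cdot s\cdot(s-u)^{\gamma}\cdot(s-u)^{\frac12-H_1}=s^{H_1-\frac12}(s-u)^{2H_2-2H_1-1}$. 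What remains is exactly the integrand of \eqref{eq:Phi2-def}, with the factor $(1-y)^{\gamma}$ coming from $(s-t)^\gamma$. This gives \eqref{eq:I-factor-main}, provided each of the two resulting integrals converges separately. Splitting the integral into the two parts is legitimate once both are shown to be absolutely convergent, which is the next step.

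For $\Phi_1$, I would use $F_1\in C^\alpha([0,1])$ from Lemma~\ref{lem:psi-factorized}. Since $0\le(1-z)y\le y$, this gives
\[
|F_1(z)-F_1(z+(1-z)y)|\le C\,((1-z)y)^\alpha\le C y^\alpha .
\]
The integrand is therefore dominated by $C y^{\alpha-H_1-\frac12}$, uniformly in $z\in[0,1]$. This dominant is integrable because $\alpha-H_1-\frac12=2H_2-2H_1-1>-1$, which holds precisely because $H_2>H_1$. The integrand is continuous in $z$ for each fixed $y$, since $F_1$ extends continuously to $[0,1]$. Dominated convergence then gives that $\Phi_1$ is finite, bounded and continuous on $[0,1]$.

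For $\Phi_2$, write $w=z+(1-z)y$. I would split the numerator as
\[
zF_2(z)-wF_2(w)\;+\;wF_2(w)\bigl(1-(1-y)^{\gamma}\bigr)
\]
and treat the ranges $y\in(0,\frac12]$ and $y\in(\frac12,1)$ separately.
\begin{itemize}
\item[(a)] On $y\le\frac12$: the map $x\mapsto xF_2(x)$ is $C^\beta$ on $[0,1]$, being a product of a Lipschitz function and a $C^\beta$ function. Hence the first piece is at most $Cy^\beta$. The second piece is at most $C\,|1-(1-y)^\gamma|\le C y$, since $F_2$ is bounded and $(1-y)^\gamma$ is smooth near $y=0$. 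This yields the dominant $C(y^{\beta-H_1-\frac12}+y^{\frac12-H_1})$. It is integrable because $\beta-H_1-\frac12=1-2H_2>-1$, using $H_2<1$.
\item[(b)] On $y>\frac12$: the integrand is bounded by $C\bigl(1+(1-y)^{\gamma}\bigr)$, which is integrable since $\gamma>-1$.
\end{itemize}
Both bounds are uniform in $z$, so dominated convergence again gives finiteness, boundedness and continuity of $\Phi_2$. Finiteness of $\Phi(u,s)$ for every $0<u<s<T$ then follows from \eqref{eq:I-factor-main}.

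The main obstacle I expect is the uniform-in-$z$ domination of the $\Phi_2$ integrand near $y=0$. This requires the decomposition above, because the factor $(1-y)^\gamma$ spoils a direct Hölder difference. It also requires checking that the relevant Hölder exponents exceed $H_1-\frac12$: $\alpha>H_1-\frac12$ is exactly $H_2>H_1$, and $\beta>H_1-\frac12$ is exactly $H_2<1$. These conditions are used only in the integrability checks of the dominants.
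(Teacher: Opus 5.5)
Your proposal is correct and follows essentially the same route as the paper. It uses the same substitution $t=u+(s-u)y$ after inserting \eqref{eq:psi-factor}, the same H\"older-plus-dominated-convergence argument for $\Phi_1$, and a $z$-uniform integrable majorant for $\Phi_2$. The one cosmetic difference concerns $\Phi_2$: you split the range at $y=\tfrac12$ and use the $C^\beta$-regularity of $x\mapsto xF_2(x)$, whereas the paper adds and subtracts $zF_2(z_y)$ and $z_yF_2(z_y)$ and bounds $(1-y)^{\alpha-1}-1\le y^{1-\alpha}(1-y)^{\alpha-1}$ on all of $(0,1)$.
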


\begin{proof}
\emph{Proof of representation \eqref{eq:I-factor-main}.}
Using~\eqref{eq:psi-factor}, we get
\begin{align*}
u^{2H_1-1}\psi(u,s) - t^{2H_1-1}\psi(t,s)
&= D_1 s^{2H_2-2}
\left[F_1(\tfrac{u}{s})-F_1(\tfrac{t}{s})\right]
\\
&\quad+ D_2 s^{H_1-\frac32}
\left[
u(s-u)^{2H_2 - H_1 - \frac32}F_2(\tfrac{u}{s})
-
t(s-t)^{2H_2 - H_1 - \frac32}
F_2(\tfrac{t}{s})
\right].
\end{align*}
Therefore
\begin{align*}
\Phi(u,s) &= 
s^{2H_2-2}
D_1\int_u^s
\frac{F_1(\frac{u}{s})-F_1(\frac{t}{s})}{(t-u)^{H_1+\frac12}}\,dt
\\
&\quad+
D_2 s^{H_1-\frac32}
\int_u^s
\frac{u(s-u)^{2H_2 - H_1 - \frac32}F_2(\frac{u}{s})
-
t(s-t)^{2H_2- H_1-\frac32}F_2(\frac{t}{s})}{(t-u)^{H_1+\frac12}}\,dt.
\end{align*}
Using the substitution $t = u + (s-u)y$ in both integrals, we arrive at
\eqref{eq:I-factor-main} with $\Phi_1,\Phi_2$ given by
\eqref{eq:Phi1-def}--\eqref{eq:Phi2-def}.

\emph{Boundedness and continuity of $\Phi_1$.}
Recall that $F_1\in C^{\alpha}([0,1])$ with
$\alpha=2H_2-H_1-\tfrac12\in(0,1)$
(Lemma~\ref{lem:psi-factorized}).
Set $z_y:=z+(1-z)y$.
By H\"older continuity,
\[
|F_1(z)-F_1(z_y)|
\le C|z-z_y|^{\alpha}
= C(1-z)^{\alpha}y^{\alpha}
\le C y^{\alpha},
\quad z \in [0,1].
\]
Hence
\[
\left|
\frac{F_1(z)-F_1(z_y)}{y^{H_1+\frac12}}
\right|
\le
C y^{\alpha-(H_1+\frac12)}.
\]
Since $\alpha-(H_1+\frac12)=2H_2-2H_1-1>-1$, the right-hand side is integrable on $(0,1)$.
Thus $\Phi_1(z)$ is well-defined for all $z\in[0,1]$ and uniformly bounded on $[0,1]$.
Finally, continuity of $\Phi_1$ on $[0,1]$ follows from dominated convergence, using the above integrable bound.

\emph{Boundedness and continuity of $\Phi_2$.}
With $z_y:=z+(1-z)y$, we can rewrite~\eqref{eq:Phi2-def} as
\[
\Phi_2(z)
=
\int_0^1
\frac{z\,F_2(z)-z_y(1-y)^{\alpha-1}F_2(z_y)}{y^{H_1+\frac12}}\,dy,
\qquad z\in[0,1].
\]
Adding and subtracting $zF_2(z_y)$ and $z_yF_2(z_y)$ yields
\[
|\Phi_2(z)|
\le
\int_0^1 \frac{|z|\;|F_2(z)-F_2(z_y)|}{y^{H_1+\frac12}}\,dy
+\int_0^1 \frac{|z-z_y|\;|F_2(z_y)|}{y^{H_1+\frac12}}\,dy
+\int_0^1 \frac{|z_y|\;|1-(1-y)^{\alpha-1}|\;|F_2(z_y)|}{y^{H_1+\frac12}}\,dy .
\]
Since $z,z_y\in[0,1]$ and $F_2$ is bounded on $[0,1]$, i.e.,
$\sup_{x\in[0,1]}|F_2(x)|\le C$, we obtain
\[
|\Phi_2(z)|
\le
\int_0^1 \frac{|F_2(z)-F_2(z_y)|}{y^{H_1+\frac12}}\,dy
+ C\int_0^1 \frac{|z-z_y|}{y^{H_1+\frac12}}\,dy
+ C\int_0^1 \frac{|1-(1-y)^{\alpha-1}|}{y^{H_1+\frac12}}\,dy.
\]

We show that each term on the right-hand side is finite.
By Lemma~\ref{lem:psi-factorized}, $F_2\in C^{\beta}([0,1])$ with
$\beta=1-\alpha\in(0,1)$, hence
\[
|F_2(z)-F_2(z_y)|
\le C|z-z_y|^{\beta}
= C(1-z)^{\beta}y^{\beta}
\le C y^{\beta},
\qquad z\in[0,1].
\]
Therefore,
\[
\frac{|F_2(z)-F_2(z_y)|}{y^{H_1+\frac12}}
\le C y^{\beta-(H_1+\frac12)},
\]
which is integrable on $(0,1)$ since $\beta-(H_1+\frac12)=1-2H_2>-1$.

Next,
\[
\frac{|z-z_y|}{y^{H_1+\frac12}}
=
\frac{(1-z)y}{y^{H_1+\frac12}}
\le y^{\frac12-H_1},
\]
and $y^{\frac12-H_1}$ is integrable on $(0,1)$ because $\frac12-H_1>-1$.

Finally, since $\alpha\in(0,1)$, we have $(1-y)^{\alpha-1}\ge 1$ for $y\in(0,1)$, and
\[
(1-y)^{\alpha-1}-1
=
\frac{1-(1-y)^{1-\alpha}}{(1-y)^{1-\alpha}}
\le
\frac{y^{1-\alpha}}{(1-y)^{1-\alpha}},
\]
where we used the Hölder estimate $1-(1-y)^{1-\alpha}\le y^{1-\alpha}$.
Hence,
\[
\frac{|1-(1-y)^{\alpha-1}|}{y^{H_1+\frac12}}
=
\frac{(1-y)^{\alpha-1}-1}{y^{H_1+\frac12}}
\le
y^{\frac12-\alpha-H_1}(1-y)^{\alpha-1}.
\]
The right-hand side is integrable on $(0,1)$ since
$\frac12-\alpha-H_1=1-2H_2>-1$ and $\alpha-1>-1$.

Thus $\Phi_2(z)$ is finite for all $z\in[0,1]$ and uniformly bounded on $[0,1]$.
Continuity of $\Phi_2$ on $[0,1]$ follows from dominated convergence, using the integrable bound
\[
C y^{\beta-(H_1+\frac12)} + C y^{\frac12-H_1} + C y^{\frac12-\alpha-H_1}(1-y)^{\alpha-1},
\]
which is independent of $z\in[0,1]$.

The lemma is proved.
\end{proof}

\subsection{Representation for \texorpdfstring{$\rho(u,s)$}{rho}}

\begin{lemma}\label{lem:rho-factorized}
Let $\frac12 < H_1 < H_2 < 1$, and let $0<s<u<T$.
Then $\rho(u,s)$ admits the representation
\begin{align}
\rho(u,s) &= 
(u-s)^{2H_2 - H_1 - \frac32}\,u^{-H_1-\frac12}(D_3  u + D_5 s)
F_3(\tfrac{s}{u})
\notag\\
&\quad+D_4 u^{-2H_2}s^{2H_2-H_1-\frac12}(u-s)^{2H_2-H_1-\frac12}
F_4(\tfrac{s}{u}) 
\notag\\
&\quad-D_5 u^{1-2H_2}s^{2H_2-H_1-\frac12}(u-s)^{2H_2-H_1-\frac32}
F_5(\tfrac{s}{u}),
\label{eq:rho-factor}
\end{align}
where, for $z\in(0,1)$,
\begin{align}
F_3(z) &= \hyper\!\left(H_1-\tfrac12, \tfrac32-H_1; 2H_2-H_1+\tfrac12;\,1-z\right),
\label{eq:F3-def-holder}
\\
F_4(z) &= \hyper\!\left(2H_2-2H_1+1,2H_2-1;\,2H_2-H_1+\tfrac32;\,1-z\right),
\label{eq:F4-def-holder}
\\
F_5(z) &= \hyper\!\left(2H_2-2H_1,2H_2-1;\,2H_2-H_1+\tfrac12;\,z\right),
\label{eq:F5-def-holder}
\end{align}
and
\begin{align}
D_3 &= 2(H_2-H_1)B\!\left(\tfrac32-H_1,2H_2-1\right),
\label{eq:D_3}
\\
D_4 &= \frac{(H_1-\tfrac12)(\tfrac32-H_1)}{2H_2-H_1+\tfrac12}
B\!\left(\tfrac32-H_1,2H_2-1\right),
\label{eq:D_4}
\\
D_5 &= (H_1-\tfrac12)
B\!\left(\tfrac32-H_1,2H_2-1\right).
\label{eq:D_5}
\end{align}
Moreover, functions $F_3$, $F_4$, and $F_5$ admit continuous extensions to $[0,1]$ and satisfy
\[
F_3 \in C^{\alpha}([0,1]), 
\quad 
F_4 \in C^{\beta}([0,1]), 
\quad
F_5 \in C^{\beta}([0,1]),  
\]
with $\alpha$ and $\beta$ given by \eqref{eq:alpha-beta}.
\end{lemma}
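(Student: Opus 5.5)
The plan is to mirror the proof of Lemma~\ref{lem:psi-factorized}: first write the integrals inside $\varphi$ and $\tau$ as Euler integrals, then differentiate in $u$, then use transformation formulas to isolate the singular power factors. Set
\[
\Psi_\varphi(u,s)=\int_s^u (u-t)^{\frac12-H_1}t^{\frac12-H_1}(t-s)^{2H_2-2}\,dt,\qquad
\Psi_\tau(u,s)=\int_0^s (u-t)^{\frac12-H_1}t^{\frac12-H_1}(s-t)^{2H_2-2}\,dt .
\]
Then $\rho=\partial_u(\Psi_\varphi+\Psi_\tau)$.

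\emph{Step 1: $\Psi_\varphi$ as a hypergeometric function.} Substitute $t=u-(u-s)x$, so that $u-t=(u-s)x$, $t-s=(u-s)(1-x)$ and $t=u\bigl(1-(1-\tfrac su)x\bigr)$. This gives
\[
\Psi_\varphi=(u-s)^{2H_2-H_1-\frac12}u^{\frac12-H_1}\int_0^1 x^{\frac12-H_1}(1-x)^{2H_2-2}\bigl(1-(1-\tfrac su)x\bigr)^{\frac12-H_1}dx .
\]
By \eqref{eq:hypergeom} this equals
\[
B\bigl(\tfrac32-H_1,2H_2-1\bigr)\,(u-s)^{2H_2-H_1-\frac12}u^{\frac12-H_1}\,F_3(\tfrac su).
\]
This is exactly where $F_3$ and the common beta factor of $D_3,D_4,D_5$ come from.

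\emph{Step 2: $\Psi_\tau$ as a hypergeometric function.} Substitute $t=sx$. This gives
\[
\Psi_\tau=s^{2H_2-H_1-\frac12}u^{\frac12-H_1}B\bigl(\tfrac32-H_1,2H_2-1\bigr)\,\hyper\bigl(H_1-\tfrac12,\tfrac32-H_1;2H_2-H_1+\tfrac12;\tfrac su\bigr).
\]

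\emph{Step 3: differentiation.} Differentiate both expressions in $u$ using \eqref{eq:hypergeo-deriv-app}. Note that $\partial_u(1-\tfrac su)=s/u^2$ and $\partial_u(\tfrac su)=-s/u^2$.

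In $\partial_u\Psi_\varphi$, the derivative of the power factors produces terms of the form $(u-s)^{2H_2-H_1-\frac32}u^{-H_1-\frac12}(\cdot\,u+\cdot\,s)F_3$. Collecting coefficients, I expect $2(H_2-H_1)=(2H_2-H_1-\tfrac12)-(H_1-\tfrac12)$ to appear in front of $u$, which matches $D_3$. The derivative of $F_3$ itself gives $\hyper\bigl(H_1+\tfrac12,\tfrac52-H_1;2H_2-H_1+\tfrac32;1-\tfrac su\bigr)$. Applying \eqref{eq:AS-15-3-3} to this function yields
\[
(s/u)^{2H_2-2H_1-\frac12}\,F_4(\tfrac su),
\]
which accounts for the $D_4$ term with the factor $s^{2H_2-H_1-\frac12}u^{-2H_2}$.

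For $\partial_u\Psi_\tau$, the hypergeometric function has argument $s/u$, which tends to $1$ on the diagonal. Here $c-a-b=2H_2-H_1-\tfrac12=\alpha>0$, so the function is bounded, but its derivative has a singular factor $(1-\tfrac su)^{\alpha-1}$. I will extract this factor with \eqref{eq:AS-15-3-3}. The plan is to differentiate and then rewrite the resulting contiguous function
\[
\hyper\bigl(H_1+\tfrac12,\tfrac52-H_1;2H_2-H_1+\tfrac32;z\bigr)=(1-z)^{2H_2-H_1-\frac32}\,\hyper\bigl(2H_2-2H_1+1,2H_2-1;2H_2-H_1+\tfrac32;z\bigr).
\]
A contiguous (Gauss) relation should then combine this with the undifferentiated term, which itself comes from the factor $u^{\frac12-H_1}$. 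The goal is to express the combination through $F_5$ and to move a part of it into the $D_5 s\,F_3$ term. That part is a multiple of $(u-s)^{2H_2-H_1-\frac32}u^{-H_1-\frac12}s$, and the coefficient $H_1-\tfrac12$ appearing in both $D_5$ terms supports this reading.

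\emph{Main obstacle.} I expect this matching in Step~3 to be the hard part: checking that the pieces of $\partial_u\Psi_\tau$ and $\partial_u\Psi_\varphi$ reorganize into exactly the three displayed terms with the stated constants. If a direct contiguous-relation argument fails, I would verify \eqref{eq:rho-factor} by comparing power series in $z=s/u$ near $0$. On the $\varphi$ side, I would first use the connection formula (Abramowitz--Stegun 15.3.6) to rewrite the functions of $1-z$ as functions of $z$, and then match coefficients.

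\emph{Regularity.} The Hölder statements follow from Appendix~\ref{app:hypergeo} (case $\lambda>0$), computing $\lambda=c-a-b$ for each function, exactly as in Lemma~\ref{lem:psi-factorized}:
\[
F_3:\ \lambda=\alpha,\qquad F_4:\ \lambda=\tfrac32-2H_2+H_1=\beta,\qquad F_5:\ \lambda=\beta.
\]
For $F_3$ and $F_4$, whose argument is $1-z$, the map $z\mapsto 1-z$ preserves Hölder continuity on $[0,1]$.
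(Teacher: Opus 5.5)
Your Step 1, the $\varphi$-part of Step 3 and the H\"older argument follow the paper, and your Step 2 is correct. The gap is the $\tau$-part of Step 3: you leave it unresolved, and the bookkeeping you propose for it is wrong. With $\alpha=2H_2-H_1-\frac12$,
\[
\partial_u\bigl[(u-s)^{\alpha}u^{\frac12-H_1}\bigr]
=(u-s)^{\alpha-1}u^{-H_1-\frac12}\bigl[\alpha u-(H_1-\tfrac12)(u-s)\bigr]
=(u-s)^{\alpha-1}u^{-H_1-\frac12}\bigl[2(H_2-H_1)u+(H_1-\tfrac12)s\bigr].
\]
So the whole term $(u-s)^{\alpha-1}u^{-H_1-\frac12}D_5\,s\,F_3(\frac su)$ already comes from $\partial_u\Psi_\varphi$, and no part of $\partial_u\Psi_\tau$ belongs there. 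Instead, $\partial_u\Psi_\tau$ must equal the $F_5$ line of \eqref{eq:rho-factor} on its own; any nonzero transferred piece would spoil the coefficient $D_5$. There is also a slip in the $\varphi$-part: \eqref{eq:AS-15-3-3} applied to $\hyper(H_1+\frac12,\frac52-H_1;2H_2-H_1+\frac32;1-\frac su)$ gives the factor $(s/u)^{2H_2-H_1-\frac32}$, since $c-a-b=2H_2-H_1-\frac32$, not $(s/u)^{2H_2-2H_1-\frac12}$. Only the former produces $s^{2H_2-H_1-\frac12}u^{-2H_2}$.

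The missing step is short. In $\Psi_\tau$ the range $[0,s]$ does not depend on $u$, and $u-t\ge u-s>0$ there. So, as the paper does, differentiate under the integral sign, then substitute $t=sx$ and use \eqref{eq:hypergeom}:
\[
\tau(u,s)=-(H_1-\tfrac12)\int_0^s(u-t)^{-\frac12-H_1}t^{\frac12-H_1}(s-t)^{2H_2-2}\,dt
=-D_5\,s^{\alpha}u^{-H_1-\frac12}\hyper\bigl(H_1+\tfrac12,\tfrac32-H_1;2H_2-H_1+\tfrac12;\tfrac su\bigr).
\]
One application of \eqref{eq:AS-15-3-3}, with $c-a-b=\alpha-1$, $c-a=2H_2-2H_1$ and $c-b=2H_2-1$, turns the hypergeometric factor into $(1-\frac su)^{\alpha-1}F_5(\frac su)$. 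Since $u^{-H_1-\frac12}(1-\frac su)^{\alpha-1}=u^{1-2H_2}(u-s)^{\alpha-1}$, this is exactly the $F_5$ line.

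If you prefer to differentiate your closed form of $\Psi_\tau$, set $z=\frac su$, $a=H_1-\frac12$ and $G=\hyper(a,\frac32-H_1;2H_2-H_1+\frac12;\cdot)$. You then get $-B(\frac32-H_1,2H_2-1)\,s^{\alpha}u^{-H_1-\frac12}\bigl[(a+z\frac{d}{dz})G\bigr](z)$. The contiguous relation you need is $(a+z\frac{d}{dz})\hyper(a,b;c;z)=a\,\hyper(a+1,b;c;z)$, which is immediate from $(a)_n(a+n)=a\,(a+1)_n$. Apply it to the sum of the two pieces before using \eqref{eq:AS-15-3-3}, and it reduces to the same computation. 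Your power-series fallback is unnecessary, and it would be messy because the connection formula introduces $z^{\alpha}$-shifted series.
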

\begin{proof}
Following the definition of $\rho(u,s)$ \eqref{eq:rho}, it will be sufficient to construct a representation for the functions $\varphi(u,s)$ and $\tau(u,s)$ given by \eqref{eq:phi} and \eqref{eq:tau} respectively.

\emph{Representation for $\varphi(u,s)$.}
Set
\[
I(u,s)
\coloneqq
\int_s^u
(u-t)^{\frac12-H_1}\, t^{\frac12-H_1}\, (t-s)^{2H_2-2}\,dt,
\qquad 0<s<u<T,
\]
so that $\varphi(u,s)=\partial_u I(u,s)$.

With the change of variables $t-s=(u-s)(1-x)$, $x\in[0,1]$, we obtain
\[
I(u,s)
=
(u-s)^{2H_2 - H_1 - \frac12}\,u^{\frac12-H_1}
\int_0^1
x^{\frac12-H_1}(1-x)^{2H_2-2}
\Bigl(1-\bigl(1-\tfrac{s}{u}\bigr)x\Bigr)^{\frac12-H_1}\,dx.
\]
By Euler's integral representation~\eqref{eq:hypergeom},
\[
I(u,s)
=
(u-s)^{2H_2 - H_1 - \frac12}\,u^{\frac12-H_1}
B\!\left(\tfrac32 - H_1,\,2H_2-1\right) 
\hyper\!\left(H_1-\tfrac12,\,\tfrac32-H_1;\,2H_2-H_1+\tfrac12;\,1-\tfrac{s}{u}\right).
\]

Differentiating with respect to $u$ and using~\eqref{eq:hypergeo-deriv-app}, we get
\begin{align*}
&\varphi(u,s)
=
B\!\left(\tfrac32 - H_1,\,2H_2-1\right)
\\*
&\quad\times\Biggl[
\Bigl(
(2H_2 - H_1 - \tfrac12)\,(u-s)^{2H_2 - H_1 - \frac32}\,u^{\frac12-H_1}
-(H_1-\tfrac12)\,(u-s)^{2H_2 - H_1 - \frac12}\,u^{-H_1-\frac12}
\Bigr)\\*
&\qquad\quad\times
\hyper\!\left(H_1-\tfrac12,\,\tfrac32-H_1;\,2H_2-H_1+\tfrac12;\,1-\tfrac{s}{u}\right)
\\
&\qquad\quad
+
(u-s)^{2H_2 - H_1 - \frac12}\,u^{\frac12-H_1}
\frac{(H_1-\tfrac12)(\tfrac32-H_1)}{2H_2 - H_1 + \tfrac12}\,
\frac{s}{u^2}
\hyper\!\left(H_1+\tfrac12,\tfrac52-H_1;2H_2-H_1+\tfrac32;1-\tfrac{s}{u}\right)
\Biggr].
\end{align*}

Factoring out the common term
$(u-s)^{2H_2 - H_1 - \frac32}\,u^{-H_1-\frac12}$
and simplifying the coefficient of the first hypergeometric function,
\[
(2H_2-H_1-\tfrac12)u-(H_1-\tfrac12)(u-s)
=2(H_2-H_1)u+(H_1-\tfrac12)s,
\]
we arrive at
\begin{align}
\varphi(u,s)
&=
(u-s)^{2H_2 - H_1 - \frac32}\,u^{-H_1-\frac12}
\notag\\
&\quad\times\biggl[
\Bigl(D_3 u + D_5 s\Bigr)\,
\hyper\!\left(H_1-\tfrac12,\,\tfrac32-H_1;\,2H_2-H_1+\tfrac12;\,1-\tfrac{s}{u}\right)
\notag\\
&\qquad\qquad
+
D_4\,\frac{s(u-s)}{u}
\hyper\!\left(H_1+\tfrac12,\,\tfrac52-H_1;\,2H_2-H_1+\tfrac32;\,1-\tfrac{s}{u}\right)
\biggr],
\label{eq:varphi-compact}
\end{align}
where the constants are given by \eqref{eq:D_3}--\eqref{eq:D_5}.

Next, by the transformation formula~\eqref{eq:AS-15-3-3},
\begin{multline*}
\hyper\!\left(H_1+\tfrac12,\,\tfrac52-H_1;\,2H_2-H_1+\tfrac32;\,1-\tfrac{s}{u}\right)
\\*
=
\left(\tfrac{s}{u}\right)^{2H_2-H_1-\frac32}
\hyper\!\left(2H_2-2H_1+1,\,2H_2-1;\,2H_2-H_1+\tfrac32;\,1-\tfrac{s}{u}\right).
\end{multline*}
Substituting this into~\eqref{eq:varphi-compact} yields the decomposition
\begin{align}
\varphi(u,s)
&=
(u-s)^{2H_2 - H_1 - \frac32}\,u^{-H_1-\frac12}\,
\bigl(D_3 u + D_5 s\bigr)\,
F_3\!\left(\tfrac{s}{u}\right)
\notag\\
&\quad
+
D_4\,
(u-s)^{2H_2 - H_1 - \frac12}\,u^{-2H_2}\,s^{2H_2-H_1-\frac12}\,
F_4\!\left(\tfrac{s}{u}\right),
\label{eq:varphi-via-F}
\end{align}
where $F_3$ and $F_4$ are given by~\eqref{eq:F3-def-holder}--\eqref{eq:F4-def-holder}.

\emph{Representation for $\tau(u,s)$.}
Differentiating with respect to $u$ in the definition~\eqref{eq:tau} gives
\[
\tau(u,s)
=
-(H_1-\tfrac12)\int_0^{s}
(u-t)^{-\frac12-H_1}\, t^{\frac12-H_1}\,(s-t)^{2H_2-2}\,dt.
\]
With the substitution $t=sx$, $x\in[0,1]$, we obtain
\[
\tau(u,s)
=
-(H_1-\tfrac12)\,s^{2H_2-H_1-\frac12}\,u^{-H_1-\frac12}
\int_0^1
x^{\frac12-H_1}(1-x)^{2H_2-2}\left(1-\tfrac{s}{u}x\right)^{-\frac12-H_1}\,dx.
\]
Applying Euler's integral representation~\eqref{eq:hypergeom} yields
\begin{multline}\label{eq:vartau-compact}
\tau(u,s)
=
-(H_1-\tfrac12)\,B\!\left(\tfrac32-H_1,\,2H_2-1\right)
s^{2H_2-H_1-\frac12}\,u^{-H_1-\frac12}
\\*
{}\times
\hyper\!\left(H_1+\tfrac12,\,\tfrac32-H_1;\,2H_2+\tfrac12-H_1;\,\tfrac{s}{u}\right).
\end{multline}

Next, by the transformation formula~\eqref{eq:AS-15-3-3},
\begin{align*}
&\hyper\!\left(H_1+\tfrac12,\,\tfrac32-H_1;\,2H_2+\tfrac12-H_1;\,\tfrac{s}{u}\right)
\\*
&\qquad=
\left(1-\tfrac{s}{u}\right)^{2H_2-H_1-\frac32}
\hyper\!\left(2H_2-2H_1,\,2H_2-1;\,2H_2-H_1+\tfrac12;\,\tfrac{s}{u}\right).
\end{align*}
Substituting this into~\eqref{eq:vartau-compact}, we obtain
\begin{equation}\label{eq:tau-via-F5}
\tau(u,s) = -D_5\,
s^{2H_2-H_1-\frac12}\,u^{-H_1-\frac12}
\left(1-\tfrac{s}{u}\right)^{2H_2-H_1-\frac32}\,
F_5\!\left(\tfrac{s}{u}\right),
\end{equation}
where $F_5$ and $D_5$ are defined in~\eqref{eq:F5-def-holder} and \eqref{eq:D_5}.

Combining~\eqref{eq:varphi-via-F} and~\eqref{eq:tau-via-F5} yields~\eqref{eq:rho-factor}.

\emph{H\"older continuity of $F_3$, $F_4$, and $F_5$.}
We argue as in Lemma~\ref{lem:psi-factorized}.
Since hypergeometric functions are analytic on $[0,1)$, it remains to control their behavior at the endpoint~$1$ of the argument.

For $F_3$, we have
\[
F_3(z) = \hyper(a,b;c;1-z),\quad
a:=H_1-\tfrac12,\; b:=\tfrac32-H_1,\; c:=2H_2-H_1+\tfrac12,
\]
hence
\[
\lambda:=c-a-b = 2H_2-H_1-\tfrac12 = \alpha \in(0,1).
\]
Therefore, by Appendix~\ref{app:hypergeo}, $F_3$ admits a continuous extension to $[0,1]$
and satisfies $F_3\in C^{\alpha}([0,1])$.

Similarly, for $F_4$ we write
\[
F_4(z) = \hyper(a',b';c';1-z),\:
a'=2H_2-2H_1+1,\, b'=2H_2-1,\, c'=2H_2-H_1+\tfrac32,
\]
so that
\[
\lambda':=c'-a'-b' = \tfrac32-2H_2+H_1 = \beta \in(0,1).
\]
Again, $F_4$ admits a continuous extension to $[0,1]$
and $F_4\in C^{\beta}([0,1])$.

Finally, for $F_5$ we have
\[
F_5(z) = \hyper(a'',b'';c'';z),\;
a'':=2H_2-2H_1,\, b'':=2H_2-1,\, c'':=2H_2-H_1+\tfrac12,
\]
and thus
\[
\lambda'':=c''-a''-b'' = \tfrac32-2H_2+H_1 = \beta \in(0,1).
\]
Hence, $F_5$ admits a continuous extension to $[0,1]$
and satisfies $F_5\in C^{\beta}([0,1])$.
This completes the proof.
\end{proof}

\begin{lemma}\label{lem:g-factorized}
Let $\frac12<H_1<H_2<1$ and $0<s<u<T$.
Define
\[
\Psi(u,s)\coloneqq \int_u^T
\frac{u^{2H_1-1}\rho(u,s)-t^{2H_1-1}\rho(t,s)}{(t-u)^{H_1+\frac12}}\,dt.
\]
Then $\Psi(u,s)$ admits the representation
\begin{equation}\label{eq:rho-int-factor}
\Psi(u,s)
=
(u-s)^{2H_2-2H_1-1}\,u^{H_1-\frac12}
\Bigl[
\Psi_1\left(\tfrac{s}{u},\tfrac{T-u}{u-s}\right)+\Psi_2\left(\tfrac{s}{u},\tfrac{T-u}{u-s}\right)+\Psi_3\left(\tfrac{s}{u},\tfrac{T-u}{u-s}\right)
\Bigr],
\end{equation}
where, for $z \in (0,1)$, $R > 0$,
\begin{align*}
\Psi_1(z,R)
&\coloneqq
\int_0^R
x^{-H_1-\frac12}\Bigl(
(D_3+D_5 z)\,F_3(z)
\\
&\qquad\qquad\qquad\qquad-(1+x)^{2H_2-H_1-\frac32}\,
\bigl(1+(1-z)x\bigr)^{H_1-\frac12}\,
\bigl(D_3+D_5 z_x\bigr)\,F_3(z_x)
\Bigr)\,dx,
\\[2mm]
\Psi_2(z,R)
&\coloneqq
D_4\,z^{2H_2-H_1-\frac12} (1-z)
\int_0^R
\frac{F_4(z) - (1+x)^{2H_2-H_1-\frac12}
\bigl(1+(1-z)x\bigr)^{H_1-2H_2-1}
F_4(z_x)
}{x^{H_1+\frac12}}\,dx,
\\[2mm]
\Psi_3(z,R)
&\coloneqq
-D_5\,z^{2H_2-H_1-\frac12}
\int_0^R
\frac{F_5(z)
-(1+x)^{2H_2-H_1-\frac32}\bigl(1+(1-z)x\bigr)^{2H_1-2H_2}
F_5(z_x)}{x^{H_1+\frac12}}\,dx,
\end{align*}
with $z_x\coloneqq \frac{z}{1+(1-z)x}$.
Moreover,
\[
\Psi_1,\Psi_2,\Psi_3\in C\bigl((0,1)\times(0,\infty)\bigr),
\qquad
\sup_{z\in(0,1),\,R>0}|\Psi_j(z,R)|<\infty,\; j=1,2,3.
\]
\end{lemma}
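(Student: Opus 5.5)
The plan is to derive \eqref{eq:rho-int-factor} by a single change of variables, and then bound each $\Psi_j$ by splitting the $x$-integral into a neighbourhood of $x=0$, where cancellation is needed, and a tail, where only decay is needed.

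\emph{Representation.} Fix $0<s<u<T$, set $z=s/u$, and substitute $t=u+(u-s)x$ with $x\in(0,R)$, $R=(T-u)/(u-s)$. Then
\[
t-u=(u-s)x,\qquad t-s=(u-s)(1+x),\qquad t=u\bigl(1+(1-z)x\bigr),\qquad \tfrac{s}{t}=z_x .
\]
We insert the three-term formula \eqref{eq:rho-factor} of Lemma~\ref{lem:rho-factorized} for both $u^{2H_1-1}\rho(u,s)$ and $t^{2H_1-1}\rho(t,s)$, taking out the common power $(u-s)^{\alpha-1}u^{H_1-\frac12}$ from the first and third terms. For the first term, for example,
\[
t^{2H_1-1}(t-s)^{\alpha-1}t^{-H_1-\frac12}(D_3t+D_5s)F_3(\tfrac st)=(t-s)^{\alpha-1}t^{H_1-\frac12}(D_3+D_5 z_x)F_3(z_x).
\]
In the new variables this becomes $(u-s)^{\alpha-1}u^{H_1-\frac12}(1+x)^{\alpha-1}(1+(1-z)x)^{H_1-\frac12}(D_3+D_5z_x)F_3(z_x)$. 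At $x=0$ the same expression reduces to the $u$-term $(u-s)^{\alpha-1}u^{H_1-\frac12}(D_3+D_5z)F_3(z)$. Also $dt/(t-u)^{H_1+\frac12}=(u-s)^{\frac12-H_1}x^{-H_1-\frac12}\,dx$, so the total power of $(u-s)$ is $\alpha-1+\frac12-H_1=2H_2-2H_1-1$, as claimed. The second term carries an extra factor $(u-s)/u=1-z$, and the third term carries $s^{\alpha}u^{\frac12-H_1-\alpha}$; rewriting both in terms of $z$ and $x$ produces exactly the prefactors and integrands of $\Psi_2$ and $\Psi_3$. This step is bookkeeping.

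\emph{Uniform bounds near $x=0$.} We take $x\in(0,1)$. Each numerator has the form $A(z)F_j(z)-w(z,x)A(z_x)F_j(z_x)$, where $w(z,0)=1$ and $w$ is smooth in $x$ with $|\partial_x w|\le C$ uniformly in $z\in(0,1)$. Adding and subtracting $A(z)F_j(z_x)$ and using Hölder continuity from Lemma~\ref{lem:rho-factorized} gives a bound $C\bigl(|z-z_x|^{\gamma}+x\bigr)$. Since
\[
|z-z_x|=\frac{z(1-z)x}{1+(1-z)x}\le x,
\]
the numerator is at most $Cx^{\gamma}$ with $\gamma\in\{\alpha,\beta\}$. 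We have $\alpha-H_1-\frac12=2H_2-2H_1-1>-1$ and $\beta-H_1-\frac12=1-2H_2>-1$, so the integrand is dominated by an integrable power independent of $(z,R)$.

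\emph{Tail $x>1$ (the main obstacle).} On the tail the two parts of each numerator are bounded separately. The $F_j(z)$-part gives $\int_1^\infty x^{-H_1-\frac12}\,dx<\infty$. For $\Psi_1$ we use $(1+(1-z)x)^{H_1-\frac12}\le(1+x)^{H_1-\frac12}$, which leaves an integrand of order $x^{\alpha-2}$; since $\alpha-2<-1$ this is integrable. For $\Psi_3$ the factor $(1+(1-z)x)^{2H_1-2H_2}$ is at most $1$, leaving $x^{2H_2-2H_1-2}$, again integrable. The delicate case is $\Psi_2$, where the crude bound gives only $x^{2H_2-2H_1-1}$, which is not integrable. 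Here the prefactor $(1-z)$ must be used together with the decay of $(1+(1-z)x)^{-(\alpha+\frac32)}$:
\begin{itemize}
\item[(a)] for $1<x<1/(1-z)$ the integral is at most $(1-z)\int_1^{1/(1-z)}x^{2H_2-2H_1-1}\,dx\le C(1-z)^{1-2H_2+2H_1}\le C$, because $2H_2-2H_1<1$;
\item[(b)] for $x>1/(1-z)$ the integrand is at most $C(1-z)^{-\alpha-\frac12}x^{-H_1-2}$ (up to the factor $(1-z)$), which integrates to a bounded quantity.
\end{itemize}
The bounded factors $z^{\alpha}$ and $\sup|F_j|$ in front are harmless.

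\emph{Conclusion.} These bounds do not depend on $(z,R)$, which gives the uniform bound on each $\Psi_j$. Continuity on $(0,1)\times(0,\infty)$ follows from dominated convergence, applied locally in $z$ with the dominating functions above, together with continuity of $R\mapsto\int_0^R$.
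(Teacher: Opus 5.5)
Your proposal is correct and follows the paper's own proof essentially step for step: the same substitution $t=u+(u-s)x$, the same add-and-subtract H\"older argument on $x\in(0,1)$ using $|z-z_x|\le x$ and $|1-Q_j|\le Cx$, the same tail bounds including the split of $\Psi_2$ at $x=(1-z)^{-1}$ that exploits the prefactor $(1-z)$, and dominated convergence for continuity. One small remark on the bookkeeping that both you and the paper leave implicit: the factor $t^{2H_1-1-2H_2}$ actually produces $\bigl(1+(1-z)x\bigr)^{2H_1-2H_2-1}$ in $\Psi_2$ (exactly as in $\Lambda_2$), not the printed $\bigl(1+(1-z)x\bigr)^{H_1-2H_2-1}=\bigl(1+(1-z)x\bigr)^{-(\alpha+\frac32)}$ that your step (b) uses; your estimate survives this correction, becoming $C(1-z)^{2H_1-2H_2}x^{-2}$ (prefactor $(1-z)$ included), which integrates over $x>(1-z)^{-1}$ to $C(1-z)^{1+2H_1-2H_2}$.
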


\begin{proof}
\textbf{Representation.}
Decompose $\rho=\rho_1+\rho_2+\rho_3$ as in Lemma~\ref{lem:rho-factorized}.
A substitution $t=u+\delta x$ in each term
\[
\int_u^T\frac{u^{2H_1-1}\rho_j(u,s)-t^{2H_1-1}\rho_j(t,s)}{(t-u)^{H_1+\frac12}}\,dt,
\qquad j=1,2,3,
\]
followed by collecting the common factor
$(u-s)^{2H_2-2H_1-1}u^{H_1-\frac12}$,
yields~\eqref{eq:rho-int-factor} with $\Psi_j$ given in the statement.
We omit routine algebraic details.

\medskip
\textbf{Boundedness.}
Write
\begin{gather*}
\Psi_1(z,R)=\int_0^R \frac{\mathcal R_1(z,x)}{x^{H_1+\frac12}}\,dx,
\qquad
\Psi_2(z,R)=D_4\,z^{2H_2-H_1-\frac12}(1-z)\int_0^R \frac{\mathcal R_2(z,x)}{x^{H_1+\frac12}}\,dx,
\\
\Psi_3(z,R)=-D_5\,z^{2H_2-H_1-\frac12}\int_0^R \frac{\mathcal R_3(z,x)}{x^{H_1+\frac12}}\,dx,
\end{gather*}
where
\begin{gather*}
\mathcal R_1(z,x):=(D_3+D_5z)F_3(z)-Q_1(z,x)\,(D_3+D_5z_x)F_3(z_x),
\\
Q_1(z,x):=(1+x)^{2H_2-H_1-\frac32}\bigl(1+(1-z)x\bigr)^{H_1-\frac12},
\\
\mathcal R_2(z,x):=F_4(z)-Q_2(z,x)\,F_4(z_x),
\quad
Q_2(z,x):=(1+x)^{2H_2-H_1-\frac12}\bigl(1+(1-z)x\bigr)^{H_1-2H_2-1},
\\
\mathcal R_3(z,x):=F_5(z)-Q_3(z,x)\,F_5(z_x),
\quad
Q_3(z,x):=(1+x)^{2H_2-H_1-\frac32}\bigl(1+(1-z)x\bigr)^{2H_1-2H_2}.
\end{gather*}
Recall that $F_3\in C^\alpha([0,1])$ and $F_4,F_5\in C^\beta([0,1])$, hence all $F_k$ are bounded on $[0,1]$.

\smallskip
\emph{(i) Small $x$.}
For $x\in(0,1)$,
\[
|z-z_x|
= \frac{z(1-z)x}{1+(1-z)x}\le x,
\]
and, since each $Q_j(z,\cdot)$ is $C^1$ on $[0,1]$ with
$\sup_{z\in(0,1),\,x\in(0,1)}|\partial_x Q_j(z,x)|<\infty$,
\[
|1-Q_j(z,x)|\le Cx,\qquad j=1,2,3,
\]
uniformly in $z\in(0,1)$.
Using H\"older continuity of $F_3,F_4,F_5$ and the decomposition
\[
F_k(z)-Q_j(z,x)F_k(z_x)=\bigl(F_k(z)-F_k(z_x)\bigr)+\bigl(1-Q_j(z,x)\bigr)F_k(z_x),
\]
we obtain, uniformly in $z\in(0,1)$,
\[
|\mathcal R_1(z,x)|\le C(x^\alpha+x),\quad
|\mathcal R_2(z,x)|\le C(x^\beta+x),\quad
|\mathcal R_3(z,x)|\le C(x^\beta+x),
\]
$x\in(0,1)$.
Therefore the corresponding integrands are dominated on $(0,1)$ by
\[
C\bigl(x^{\alpha-(H_1+\frac12)}+x^{\frac12-H_1}\bigr)
\quad\text{or}\quad
C\bigl(x^{\beta-(H_1+\frac12)}+x^{\frac12-H_1}\bigr),
\]
which are integrable because
$\alpha-(H_1+\frac12)=2H_2-2H_1-1>-1$,
$\beta-(H_1+\frac12)=1-2H_2>-1$, and $\frac12-H_1>-1$.

\smallskip
\emph{(ii) Large $x$.}
For $\Psi_1$ and $\Psi_3$, we start with upper bounds for the power factors $Q_1$ and $Q_3$.
Since\linebreak $1+(1-z)x\le 1+x$ for all $x\ge0$ and $z\in(0,1)$, we have
\[
Q_1(z,x)
=(1+x)^{2H_2-H_1-\frac32}\bigl(1+(1-z)x\bigr)^{H_1-\frac12}
\le (1+x)^{2H_2-2},
\qquad x\ge0,
\]
because $H_1-\frac12>0$.
Moreover, since $2H_1-2H_2<0$ and $1+(1-z)x\ge 1$, we obtain
\[
Q_3(z,x)
=(1+x)^{2H_2-H_1-\frac32}\bigl(1+(1-z)x\bigr)^{2H_1-2H_2}
\le (1+x)^{2H_2-H_1-\frac32},
\quad x\ge0.
\]

Using boundedness of $F_3$ and $F_5$ on $[0,1]$, it follows that for $x\ge 1$,
\[
\frac{|\mathcal R_1(z,x)|}{x^{H_1+\frac12}}
\le
C\left(\frac{1}{x^{H_1+\frac12}}+\frac{Q_1(z,x)}{x^{H_1+\frac12}}\right)
\le
C\left(x^{-H_1-\frac12}+x^{2H_2-H_1-\frac52}\right),
\]
and similarly
\[
\frac{|\mathcal R_3(z,x)|}{x^{H_1+\frac12}}
\le
C\left(\frac{1}{x^{H_1+\frac12}}+\frac{Q_3(z,x)}{x^{H_1+\frac12}}\right)
\le
C\left(x^{-H_1-\frac12}+x^{2H_2-2H_1-2}\right).
\]
Both right-hand sides are integrable on $(1,\infty)$ since $-H_1-\frac12<-1$, $2H_2-H_1-\frac52<-1$ and $2H_2-2H_1-2<-1$.

For $\Psi_2$, we split $[1,\infty)$ into $[1,(1-z)^{-1}]$ and $[(1-z)^{-1},\infty)$.
If $1\le x\le (1-z)^{-1}$, then $(1-z)x\le1$ and thus
$(1+(1-z)x)^{H_1-2H_2-1}\le 1$; hence, using boundedness of $F_4$,
\[
(1-z)\frac{|\mathcal R_2(z,x)|}{x^{H_1+\frac12}}
\le C(1-z)\bigl(x^{-H_1-\frac12}+x^{2H_2-2H_1-1}\bigr),
\]
whose integral over $[1,(1-z)^{-1}]$ is uniformly bounded.
If $x\ge (1-z)^{-1}$, then $1+(1-z)x\ge(1-z)x$ and, because $H_1-2H_2-1<0$,
\[
Q_2(z,x)\le C(1-z)^{H_1-2H_2-1}x^{-\frac32},
\]
so
\[
(1-z)\frac{|\mathcal R_2(z,x)|}{x^{H_1+\frac12}}
\le C\Bigl((1-z)x^{-H_1-\frac12}+(1-z)^{H_1-2H_2}x^{-H_1-2}\Bigr),
\]
which is integrable on $[(1-z)^{-1},\infty)$ with a bound independent of $z$.

Thus we obtain an $L_1(0,\infty)$-majorant for each integrand, uniform in $z$
(and with $(1-z)$ included for $\Psi_2$), which implies
\[
\sup_{z\in(0,1),\,R>0}|\Psi_j(z,R)|<\infty,\qquad j=1,2,3.
\]

\smallskip
\textbf{Continuity.}
Fix $(z,R)\in(0,1)\times(0,\infty)$ and let $(z_n,R_n)\to(z,R)$.
For each $x>0$ the integrands converge pointwise by continuity of $z\mapsto z_x$, the power factors, and $F_3,F_4,F_5$.
Since $R_n\le R+1$ for $n$ large and $z_n$ stays in a compact subset of $(0,1)$, the majorants from $(i)$--$(ii)$ are integrable on $[0,R+1]$ and dominate the truncated integrands.
Dominated convergence yields $\Psi_j(z_n,R_n)\to\Psi_j(z,R)$, hence
$\Psi_1,\Psi_2,\Psi_3\in C((0,1)\times(0,\infty))$.
\end{proof}

\begin{remark}\label{rem:Psi2-vanishes}
Inspecting the proof of the boundedness of $\Psi_2$, one actually obtains a decay rate as $z\uparrow1$:
there exists $C>0$ such that
\[
\sup_{R>0}|\Psi_2(z,R)|
\le
C(1-z)^{1+2H_1-2H_2},
\qquad z\in(0,1).
\]
This follows by integrating the majorants obtained in the proof in the regions
$x\in(0,1)$, $1\le x\le (1-z)^{-1}$, and $x\ge (1-z)^{-1}$.
Consequently,
\[
\lim_{z\uparrow1}\sup_{R>0}|\Psi_2(z,R)|=0,
\]
because $1+2H_1-2H_2>0$, so $\Psi_2$ admits a continuous extension at $z=1$, given by $\Psi_2(1,R)=0$ for all $R>0$.
\end{remark}

\begin{lemma}\label{lem:q-factorized}
Let $\frac12<H_1<H_2<1$ and $0<u<s<T$. Define
\[
\Lambda(u,s)\coloneqq \int_{s}^T (t-u)^{-\frac12-H_1}\,t^{2H_1-1}\rho(t,s)\,dt.
\]
Then $\Lambda(u,s)$ admits the representation
\begin{equation}\label{eq:q-factor}
\Lambda(u,s)
=
(s-u)^{2H_2-2H_1-1}\,s^{H_1-\frac12}
\Bigl[
\Lambda_1(\tfrac{u}{s},\tfrac{T-s}{s-u})+\Lambda_2(\tfrac{u}{s},\tfrac{T-s}{s-u})+\Lambda_3(\tfrac{u}{s},\tfrac{T-s}{s-u})
\Bigr],
\end{equation}
where, for $z \in (0,1)$ and $R>0$,
\begin{align*}
\Lambda_1(z,R)
&\coloneqq
\int_0^R
x^{2H_2-H_1-\frac32}(1+x)^{-H_1-\frac12}
\bigl(1+(1-z)x\bigr)^{H_1-\frac32}
\Bigl[(D_3+D_5)+D_3(1-z)x\Bigr]\,
F_3(y_x)\,dx,
\\
\Lambda_2(z,R)
&\coloneqq
D_4(1-z)
\int_0^R
x^{2H_2-H_1-\frac12}(1+x)^{-H_1-\frac12}
\bigl(1+(1-z)x\bigr)^{2H_1-2H_2-1}\,
F_4(y_x)\,dx,
\\
\Lambda_3(z,R)
&\coloneqq
-D_5\!\int_0^R\!
x^{2H_2-H_1-\frac32}(1+x)^{-H_1-\frac12}
\bigl(1+(1-z)x\bigr)^{2H_1-2H_2}
F_5(y_x)dx,
\end{align*}
with
\[
y_x\coloneqq \frac{1}{1+(1-z)x}.
\]
Moreover,
\[
\Lambda_1,\Lambda_2,\Lambda_3\in C\bigl((0,1)\times(0,\infty)\bigr),
\qquad
\sup_{z\in(0,1),\,R>0}|\Lambda_j(z,R)|<\infty,\; j=1,2,3.
\]
\end{lemma}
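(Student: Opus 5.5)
The plan has two parts: first derive the representation \eqref{eq:q-factor} by substituting the factorization of Lemma~\ref{lem:rho-factorized} into $\Lambda$ and rescaling, then prove uniform boundedness and continuity of each $\Lambda_j$ by explicit integrable majorants.

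\emph{Representation.} Write $\rho=\rho_1+\rho_2+\rho_3$ according to \eqref{eq:rho-factor}, evaluated at $(t,s)$ with $t>s$. Substitute $t=s+(s-u)x$, so that
\[
t-u=(s-u)(1+x),\qquad t-s=(s-u)x,\qquad t=s\bigl(1+(1-z)x\bigr),\qquad \tfrac{s}{t}=y_x,
\]
with $z=u/s$ and upper limit $R=(T-s)/(s-u)$. For $\rho_1$ the integrand is
\[
(t-u)^{-\frac12-H_1}\,t^{2H_1-1}(t-s)^{2H_2-H_1-\frac32}\,t^{-H_1-\frac12}(D_3t+D_5s)F_3(s/t).
\]
Writing $D_3t+D_5s=s\bigl[(D_3+D_5)+D_3(1-z)x\bigr]$ and collecting powers of $(s-u)$ and $s$ gives the prefactor $(s-u)^{2H_2-2H_1-1}s^{H_1-\frac12}$ times the integrand of $\Lambda_1$. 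The exponent bookkeeping is as follows: $(s-u)$ appears to the power $-\frac12-H_1+2H_2-H_1-\frac32+1=2H_2-2H_1-1$, and $s$ to the power $2H_1-1-H_1-\frac12+1=H_1-\frac12$. The terms $\rho_2$ and $\rho_3$ are handled identically. The extra $(t-s)/t$ factor in $\rho_2$ produces the factor $(1-z)x$ and the shifted powers that appear in $\Lambda_2$. I would present one term in detail and only check the exponents for the others.

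\emph{Boundedness.} All of $F_3,F_4,F_5$ are bounded on $[0,1]$ by Lemma~\ref{lem:rho-factorized}, and $y_x\in(0,1]$, so each $F_k(y_x)$ can be bounded by a constant. No H\"older estimate is needed here, since there is no difference quotient.

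Near $x=0$, the integrands behave like $x^{2H_2-H_1-\frac32}$ (or $x^{2H_2-H_1-\frac12}$). These are integrable because $2H_2-H_1-\frac32>-1$, i.e.\ $\alpha>0$. The remaining factors are bounded by $1$, or by $D_3+D_5+D_3x$, on $x\le1$.

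For large $x$:
\begin{itemize}
\item[] For $\Lambda_1$, since $H_1-\frac32<0$ and $(1-z)x\le 1+(1-z)x$, we get $(1+(1-z)x)^{H_1-\frac32}(1-z)x\le (1+(1-z)x)^{H_1-\frac12}\le(1+x)^{H_1-\frac12}$. The integrand is therefore at most $Cx^{2H_2-H_1-\frac32}(1+x)^{-1}$, which is integrable at infinity since $2H_2-H_1-\frac52<-1$.
\item[] For $\Lambda_3$, the factor $(1+(1-z)x)^{2H_1-2H_2}\le1$ leaves $x^{2H_2-2H_1-2}$, which is integrable.
\item[] For $\Lambda_2$, the factor $(1-z)x$ can be absorbed as $(1-z)x(1+(1-z)x)^{-1}\le1$. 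This leaves $x^{2H_2-H_1-\frac32}(1+x)^{-H_1-\frac12}(1+(1-z)x)^{2H_1-2H_2}\le Cx^{2H_2-2H_1-2}$ for $x\ge1$, which is integrable.
\end{itemize}
All majorants are independent of $z$ and $R$, so the suprema are finite.

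\emph{Continuity.} Since $y_x$ depends continuously on $z$ and $F_k$ is continuous on $[0,1]$, the integrands are continuous in $z$. Continuity in $(z,R)$ then follows from dominated convergence with the same majorants, plus continuity of $R\mapsto\int_0^R$. The main obstacle I expect is only the exponent bookkeeping in the substitution for $\rho_1$ and $\rho_2$, especially matching the factor $D_3t+D_5s$ and the power $(1+(1-z)x)^{H_1-\frac32}$. The analytic estimates are easier than in Lemma~\ref{lem:g-factorized}, because no cancellation is required.
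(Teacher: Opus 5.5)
Your proposal is correct and takes the same route as the paper's sketch. You apply the substitution $t=s+(s-u)x$ to the three terms of $\rho$ from Lemma~\ref{lem:rho-factorized}, then use $z$-uniform integrable majorants and dominated convergence; your global majorants (via $1+(1-z)x\le 1+x$ and $(1-z)x\,(1+(1-z)x)^{-1}\le 1$) make the paper's splitting of the $x$-range at $(1-z)^{-1}$ unnecessary, though that splitting is what gives the decay rate in Remark~\ref{rem:Lambda2-vanishes}, which the lemma itself does not need.
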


\begin{proof}[Proof sketch]
The proof follows the same pattern as Lemma~\ref{lem:g-factorized}, but is simpler because the integrals
contain no difference terms.
Insert the decomposition $\rho=\rho_1+\rho_2+\rho_3$ from Lemma~\ref{lem:rho-factorized} into $\Lambda(u,s)$.
Set $\delta=s-u$, $z=u/s\in(0,1)$, $R=(T-s)/\delta$ and use the substitution $t=s+\delta x$, $x\in[0,R]$.
Collecting the common factor $\delta^{2H_2-2H_1-1}s^{H_1-\frac12}$ yields \eqref{eq:q-factor} with the stated
$\Lambda_1,\Lambda_2,\Lambda_3$ (here $y_x=(1+(1-z)x)^{-1}$).

For boundedness and continuity, note that the integrands are continuous in $(z,x)$ and $F_3,F_4,F_5$ are bounded on $[0,1]$.
As in Lemma~\ref{lem:g-factorized}, split the $x$-integrals into
$(0,1)$, $(1,(1-z)^{-1})$, $((1-z)^{-1},\infty)$,
and estimate each part by elementary power bounds.
\end{proof}

\begin{remark}\label{rem:Lambda2-vanishes}
Similarly to Remark \ref{rem:Psi2-vanishes}, one may obtain a decay rate of $\Lambda_2$ as $z\uparrow1$.
More precisely, there exists a constant $C>0$ such that
\[
\sup_{R>0}|\Lambda_2(z,R)|
\le
C(1-z)^{1+2H_1-2H_2},
\qquad z\in(0,1).
\]
Consequently,
$\lim_{z\uparrow1}\sup_{R>0}|\Lambda_2(z,R)|=0$,
because $1+2H_1-2H_2>0$.
In particular, $\Lambda_2$ admits a continuous extension at $z=1$,
given by $\Lambda_2(1,R)=0$ for all $R>0$.
\end{remark}

\subsection{Representation of \texorpdfstring{$K(u,s)$}{K}}
\begin{thm}\label{thm:K-factorized}
Let $\frac12<H_1<H_2<1$ and let $K(u,s)$ be the kernel defined in
Theorem~\ref{thm:fredholm}.  
Then for $u,s \in (0,T)$, $u\neq s$, the kernel admits the representation
\begin{equation}\label{eq:K-factorized}
K(u,s) = |u-s|^{2H_2-2H_1-1} L(u,s),
\end{equation}
where
\[
L(u,s)=
\begin{cases}
L_-(u,s), & 0<s<u<T,\\[2mm]
L_+(u,s), & 0<u<s<T,
\end{cases}
\]
and the functions $L_-$ and $L_+$ are given by
\begin{align}
L_-(u,s)
&=(T-u)^{\frac12-H_1}(u-s)^{H_1-\frac12}
\notag\\
&\quad\times\Bigl[
(D_3+ D_5 \tfrac{s}{u})
F_3\left(\tfrac{s}{u}\right)
+ D_4(\tfrac{s}{u})^{2H_2-H_1-\frac12}
(1-\tfrac{s}{u})
F_4\left(\tfrac{s}{u}\right)
- D_5 (\tfrac{s}{u})^{2H_2-H_1-\frac12}
F_5\left(\tfrac{s}{u}\right)
\Bigl]
\notag\\
&\quad+ (H_1-\tfrac12)
\sum_{j=1}^3\Psi_j(\tfrac{s}{u},\tfrac{T-u}{u-s}),
\label{eq:Lminus-def}
\end{align}
and
\begin{align}
L_+(u,s)
&=
D_1u^{\frac12-H_1}s^{2H_2-2}(s-u)^{\frac32+H_1-2H_2}
\Bigl[
F_1\left(\tfrac{u}{s}\right)
+(H_1-\tfrac12)\Phi_1(\tfrac{u}{s})
\Bigr]
+ D_2\,(\tfrac{u}{s})^{\frac32-H_1}
F_2(\tfrac{u}{s})
\notag\\
&\quad
+ (H_1-\tfrac12)D_2 (\tfrac{u}{s})^{\frac12-H_1}
\Phi_2(\tfrac{u}{s})
-(H_1-\tfrac12)(\tfrac{u}{s})^{\frac12-H_1}
\sum_{j=1}^3
\Lambda_j(\tfrac{u}{s},\tfrac{T-s}{s-u}).
\label{eq:Lplus-def}
\end{align}

Both one-sided limits
\[
\lim_{\substack{(u,s)\to(t,t)\\ 0<s<u<T}}L_-(u,s),
\qquad
\lim_{\substack{(u,s)\to(t,t)\\ 0<u<s<T}}L_+(u,s),
\qquad t\in(0,T),
\]
exist, are independent of $t$, and coincide.
Hence, $L$ admits a continuous extension to the diagonal $\{(t,t):t\in(0,T)\}$.
\end{thm}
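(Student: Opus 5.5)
The proof splits into two parts. First we derive \eqref{eq:K-factorized} by direct substitution of the factorizations already obtained. Second we study the diagonal limits, using continuity and boundedness of the auxiliary functions.

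\emph{Factorization.} For $0<s<u<T$ we insert Lemma~\ref{lem:rho-factorized} into the first term $u^{H_1-\frac12}(T-u)^{\frac12-H_1}\rho(u,s)$ of $K_-$. We then pull out $(u-s)^{2H_2-2H_1-1}$. The three summands of \eqref{eq:rho-factor} carry the powers
\[
(u-s)^{2H_2-H_1-\frac32}u^{-H_1-\frac12}\cdot u,\qquad
u^{-2H_2}s^{2H_2-H_1-\frac12}(u-s)^{2H_2-H_1-\frac12},\qquad
u^{1-2H_2}s^{2H_2-H_1-\frac12}(u-s)^{2H_2-H_1-\frac32}.
\]
Multiplied by $u^{H_1-\frac12}$, each becomes $(u-s)^{2H_2-H_1-\frac32}$ times the bracket in \eqref{eq:Lminus-def}, written in $z=s/u$. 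For the second summand we use $u^{-2H_2}s^{2H_2-H_1-\frac12}u^{H_1+\frac12}(u-s)=z^{2H_2-H_1-\frac12}(1-z)u^{0}$, so all powers of $u$ cancel. Writing $(u-s)^{2H_2-H_1-\frac32}=(u-s)^{2H_2-2H_1-1}(u-s)^{H_1-\frac12}$ gives the first part of $L_-$. The second term of $K_-$ is $(H_1-\frac12)u^{\frac12-H_1}\Psi(u,s)$. By Lemma~\ref{lem:g-factorized}, the prefactor $u^{H_1-\frac12}$ cancels, which gives the $\Psi_j$ part.

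For $u<s$ we proceed the same way. We use Lemma~\ref{lem:psi-factorized} for $u^{H_1-\frac12}(s-u)^{\frac12-H_1}\psi$, Lemma~\ref{lem:Phi-factorized} for the $\Phi$ integral, and Lemma~\ref{lem:q-factorized} for $\Lambda$. Factoring out $(s-u)^{2H_2-2H_1-1}$ is pure exponent bookkeeping. For example, the $D_2$ piece gives $u^{\frac32-H_1}s^{H_1-\frac32}(s-u)^{2H_2-2H_1-1}$, which equals $(s-u)^{2H_2-2H_1-1}z^{\frac32-H_1}$. Likewise, $u^{\frac12-H_1}s^{H_1-\frac12}$ produces $z^{\frac12-H_1}$ in front of $\Phi_2$ and of the $\Lambda_j$.

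\emph{Diagonal limits.} Let $(u,s)\to(t,t)$ with $t\in(0,T)$ fixed. Then $z\to1$ and $R=(T-u)/(u-s)\to\infty$ (respectively $(T-s)/(s-u)\to\infty$).

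On the $L_-$ side, the bracket term is bounded by continuity of $F_3,F_4,F_5$ on $[0,1]$, and it carries the factor $(u-s)^{H_1-\frac12}\to0$, so it vanishes. On the $L_+$ side, the $D_1$ term carries $(s-u)^{\frac32+H_1-2H_2}\to0$, since the exponent $\beta+2H_1-... $ is positive: $\frac32+H_1-2H_2=\beta>0$. This term is multiplied by the bounded quantity $F_1+(H_1-\frac12)\Phi_1$ and by $u^{\frac12-H_1}s^{2H_2-2}$, which is bounded near $t$, so it also vanishes.

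Thus
\[
\lim L_-=(H_1-\tfrac12)\lim_{z\uparrow1,R\to\infty}\sum_j\Psi_j(z,R),
\qquad
\lim L_+=D_2F_2(1)+(H_1-\tfrac12)\bigl[D_2\Phi_2(1)-\lim\textstyle\sum_j\Lambda_j(z,R)\bigr].
\]
These expressions depend only on $(z,R)$, so independence of $t$ is automatic once the limits exist. By Remarks~\ref{rem:Psi2-vanishes} and~\ref{rem:Lambda2-vanishes}, $\Psi_2,\Lambda_2\to0$.

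For the remaining $\Psi_1,\Psi_3,\Lambda_1,\Lambda_3$ we apply dominated convergence on $(0,\infty)$, using the $z$-uniform majorants from the proofs of Lemmas~\ref{lem:g-factorized} and~\ref{lem:q-factorized}. At $z=1$ the integrands simplify because $z_x=y_x=1$ and $1+(1-z)x=1$. This yields explicit integrals: for example
\[
\Psi_1(1,\infty)=(D_3+D_5)F_3(1)\int_0^\infty x^{-H_1-\frac12}\bigl(1-(1+x)^{2H_2-H_1-\frac32}\bigr)\,dx,
\]
and
\[
\Lambda_1(1,\infty)=(D_3+D_5)F_3(1)\int_0^\infty x^{2H_2-H_1-\frac32}(1+x)^{-H_1-\frac12}\,dx,
\]
which is a Beta integral. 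The analogous formulas hold with $F_5(1)$ for $\Psi_3,\Lambda_3$. Similarly, $\Phi_2(1)=F_2(1)\int_0^1 y^{-H_1-\frac12}\bigl(1-(1-y)^{\alpha-1}\bigr)\,dy$. The value $F_j(1)$ is given by Gauss's formula.

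\emph{Main obstacle.} The hardest step is the final identity $\lim L_-=\lim L_+$. It requires evaluating these Beta-type integrals, via integration by parts and $B(a,b)$ identities, and then checking that the Gauss values $F_2(1),F_3(1),F_5(1)$, combined with $D_2,\dots,D_5$, match. One consistency check is that $F_5(1)=F_3(1)\cdot\Gamma$-ratios. I would first reduce everything to Gamma functions and look for the reflection-type cancellation.

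If direct matching proves too messy, a fallback is available. We have shown that $|u-s|^{2H_2-2H_1-1}$ is integrable and that $L$ is bounded near the diagonal. A jump of $L$ across the diagonal would produce, through $K$, an odd singular part $\operatorname{sign}(u-s)|u-s|^{2H_2-2H_1-1}$. This could be tested against the symmetry structure of $\Gamma_{H_1}^{-1}\Gamma_{H_2}$ applied to smooth functions, by comparing local expansions with $\Gamma_{H_1}^{-1}\Gamma_{H_2}$ acting on $x^k$. This route is less direct, however, and I would use it only as a check.
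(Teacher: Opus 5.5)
\textbf{Verdict: genuine gap.} You never prove that the two one-sided diagonal limits coincide.

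Your factorization matches Steps~1--2 of the paper's proof, with one slip: in the $D_4$ bookkeeping the factor is $u^{H_1-\frac12}$, not $u^{H_1+\frac12}$. So does your handling of the vanishing terms and of the limits $\Psi_j(1,\infty)$, $\Lambda_j(1,\infty)$, $\Phi_2(1)$. But existence and $t$-independence of the limits are, as you say, essentially free. The equality $\ell_+=\ell_-$ is the only nontrivial claim of the statement. ``Reduce to Gamma functions and look for a reflection-type cancellation'' is a plan, not a proof. Your fallback does not close the gap either. $\Gamma_{H_1}^{-1}\Gamma_{H_2}$ is symmetric only with respect to the $\Gamma_{H_1}$-weighted inner product. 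Deducing from that symmetry that an odd part $\mathrm{sign}(u-s)\,|u-s|^{2H_2-2H_1-1}$ of $K$ vanishes would need a localization argument with quantitative (H\"older-type) control of $L$ near the diagonal, which you do not have.

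What is missing is the explicit evaluation, which the paper carries out.
\begin{enumerate}
\item $F_3(1)=1$ trivially, because its argument is $1-z$; no Gauss formula is involved there.
\item The $\Psi_1,\Psi_3$ contributions and the $\Lambda_1,\Lambda_3$ contributions carry the same factor $\Xi:=(D_3+D_5)-D_5F_5(1)$.
\item Take $\alpha,\beta$ from \eqref{eq:alpha-beta}. Lemma~\ref{lem:beta-integrals}(ii) and $\int_0^\infty x^{\alpha-1}(1+x)^{-H_1-\frac12}\,dx=B(\alpha,1+2H_1-2H_2)$ give $\ell_-=\kappa_2\Xi$ and $\ell_+=A-\kappa_1\Xi$. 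Here $\kappa_1=(H_1-\tfrac12)B(\alpha,1+2H_1-2H_2)$ and $\kappa_2=\beta\,B(\tfrac32-H_1,1+2H_1-2H_2)$.
\item By Lemma~\ref{lem:beta-integrals}(i), $A=D_2F_2(1)+(H_1-\tfrac12)D_2\Phi_2(1)=2(H_2-H_1)D_2F_2(1)B(\tfrac32-H_1,\alpha)$.
\item So the claim is exactly $A=(\kappa_1+\kappa_2)\Xi$.
\item Gauss's formula for $F_2(1)$ and $F_5(1)$ gives $\Xi=\Gamma(2H_2-1)\bigl[\Gamma(\tfrac32-H_1)/\Gamma(\alpha)-\Gamma(\beta)/\Gamma(H_1-\tfrac12)\bigr]$.
\item In $(\kappa_1+\kappa_2)\Xi$ the mixed terms cancel.
\item Euler's reflection formula, with $\alpha+\beta=1$ and $\sin^2x-\sin^2y=\sin(x-y)\sin(x+y)$, turns both sides into
\[
\frac{\pi\,\Gamma(2H_2-1)\,\Gamma(1+2H_1-2H_2)\,\sin\bigl(\pi(2H_2-2H_1)\bigr)\sin\bigl(\pi(2H_2-1)\bigr)}{\Gamma^2(H_1-\frac12)\,\sin^2\bigl(\pi(H_1-\frac12)\bigr)\,\sin(\pi\alpha)}.
\]
\end{enumerate}
Without this computation, or an equivalent rigorous argument, the continuity of $L$ across the diagonal is not established.
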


\begin{proof}
\emph{Step 1. Representation.}
Insert the representations from
Lemmas~\ref{lem:psi-factorized}, \ref{lem:Phi-factorized},
\ref{lem:rho-factorized}, \ref{lem:g-factorized}, and \ref{lem:q-factorized}
into the formula for $K_\pm$ in Theorem~\ref{thm:fredholm}.
In each region $s<u$ and $u<s$, factor out the common power $|u-s|^{2H_2-2H_1-1}$.
This gives \eqref{eq:K-factorized} with $L_\pm$ defined by
\eqref{eq:Lminus-def}--\eqref{eq:Lplus-def}.

\smallskip
\emph{Step 2. Passage to the diagonal.}
Fix $t\in(0,T)$.
In \eqref{eq:Lminus-def}, the explicit $\rho$-part is multiplied by $(u-s)^{H_1-\frac12}$,
hence it vanishes as $(u,s)\to(t,t)$ with $s<u$ since $H_1-\frac12>0$.
By Lemma~\ref{lem:g-factorized}, $\Psi_j\in C((0,1)\times(0,\infty))$ and are uniformly bounded.
Moreover, by Remark~\ref{rem:Psi2-vanishes}, $\Psi_2(z,R)\to0$ as $z\uparrow1$ uniformly in $R$,
and the same majorants as in Lemma~\ref{lem:g-factorized} yield dominated convergence on $(0,\infty)$.
Therefore the limits $\Psi_j(1,\infty):=\lim_{z\uparrow1,\,R\to\infty}\Psi_j(z,R)$ exist for $j=1,3$, and
\begin{equation}\label{eq:Lminus-diag}
\lim_{\substack{(u,s)\to(t,t)\\ s<u}}L_-(u,s)
=
(H_1-\tfrac12)\bigl[\Psi_1(1,\infty)+\Psi_3(1,\infty)\bigr]
\eqqcolon \ell_- .
\end{equation}

Similarly, in \eqref{eq:Lplus-def} the first line is multiplied by
$(s-u)^{\frac32+H_1-2H_2}$, which tends to $0$ as $(u,s)\to(t,t)$ with $u<s$
because $\frac32+H_1-2H_2>0$.
Furthermore, by Lemma~\ref{lem:q-factorized} and Remark~\ref{rem:Lambda2-vanishes},
the limits $\Lambda_j(1,\infty):=\lim_{z\uparrow1,\,R\to\infty}\Lambda_j(z,R)$ exist and
$\Lambda_2(1,\infty)=0$.
Hence,
\begin{equation}\label{eq:Lplus-diag}
\lim_{\substack{(u,s)\to(t,t)\\ u<s}}L_+(u,s)
=
D_2F_2(1)+(H_1-\tfrac12)D_2\Phi_2(1)
\\*
{}-(H_1-\tfrac12)\bigl[\Lambda_1(1,\infty)+\Lambda_3(1,\infty)\bigr]
\eqqcolon \ell_+ .
\end{equation}

\smallskip
\emph{Step 3. Evaluation of the $\Psi$-constants.}
At $z=1$ we have $z_x\equiv1$. Moreover, $F_3(1)=1$ as the value of a
hypergeometric function at zero. Passing to the limit in the expressions for
$\Psi_1$ and $\Psi_3$ from Lemma~\ref{lem:g-factorized}, we obtain
\begin{align*}
\Psi_1(1,\infty)
&=(D_3+D_5)\int_0^\infty x^{-H_1-\frac12}\Bigl[1-(1+x)^{2H_2-H_1-\frac32}\Bigr]\,dx,
\\
\Psi_3(1,\infty)
&=-D_5F_5(1)\int_0^\infty x^{-H_1-\frac12}\Bigl[1-(1+x)^{2H_2-H_1-\frac32}\Bigr]\,dx .
\end{align*}
By Lemma~\ref{lem:beta-integrals}$(ii)$,
\begin{equation}\label{eq:I-psi}
\int_0^\infty x^{-H_1-\frac12}\Bigl[1-(1+x)^{2H_2-H_1-\frac32}\Bigr]\,dx
=\frac{\frac32+H_1-2H_2}{H_1-\frac12}\,
B \left(\tfrac32-H_1,\,1+2H_1-2H_2\right).
\end{equation}
Combining \eqref{eq:Lminus-diag}--\eqref{eq:I-psi} yields
\begin{equation}\label{eq:Lminus-diag-evaluated}
\ell_-
=
\left(\tfrac32+H_1-2H_2\right)
B\!\left(\tfrac32-H_1,\,1+2H_1-2H_2\right)\,\Xi,
\end{equation}
where
\begin{equation}\label{eq:Xi-def}
\Xi:=(D_3+D_5)-D_5F_5(1).
\end{equation}

\smallskip
\emph{Step 4. Evaluation of the $\Lambda$-constants.}
At $z=1$ we have $y_x\equiv1$, $F_3(1)=1$, and $\Lambda_2(1,\infty)=0$.
Thus, Lemma~\ref{lem:q-factorized} gives
\begin{align*}
\Lambda_1(1,\infty)
&=(D_3+D_5)\int_0^\infty x^{2H_2-H_1-\frac32}(1+x)^{-H_1-\frac12}\,dx,
\\
\Lambda_3(1,\infty)
&=-D_5F_5(1)\int_0^\infty x^{2H_2-H_1-\frac32}(1+x)^{-H_1-\frac12}\,dx .
\end{align*}
The integral equals
\[
\int_0^\infty x^{2H_2-H_1-\frac32}(1+x)^{-H_1-\frac12}\,dx
=
B\!\left(2H_2-H_1-\tfrac12,\,1+2H_1-2H_2\right),
\]
and therefore
\begin{equation}\label{eq:Lam-sum}
(H_1-\tfrac12)\bigl[\Lambda_1(1,\infty)+\Lambda_3(1,\infty)\bigr]
=
(H_1-\tfrac12)B\left(2H_2-H_1-\tfrac12,1+2H_1-2H_2\right)\Xi .
\end{equation}

\smallskip
\emph{Step 5. Evaluation of the $(\Phi_2,F_2)$ contribution.}
By Lemma~\ref{lem:Phi-factorized} and dominated convergence,
\[
\Phi_2(1)
=
F_2(1)\int_0^1\frac{1-(1-y)^{2H_2-H_1-\frac32}}{y^{H_1+\frac12}}\,dy .
\]
Applying Lemma~\ref{lem:beta-integrals}$(i)$ yields
\[
\Phi_2(1)
=
-\frac{F_2(1)}{H_1-\frac12}
\Bigl[(2H_1-2H_2)B\!\left(\tfrac32-H_1,\,2H_2-H_1-\tfrac12\right)+1\Bigr].
\]
Consequently,
\[
A \coloneqq D_2F_2(1)+(H_1-\tfrac12)D_2\Phi_2(1)
= 2(H_2-H_1)D_2F_2(1)
B(\tfrac32-H_1,2H_2-H_1-\tfrac12).
\]
Using the explicit definitions of $D_2$ and $F_2(1)$, together with the value
of $F_2(1)$ from \eqref{eq:hypergeo-limit-1}, we obtain
\begin{equation}\label{eq:first-two-terms-closed}
A =
\frac{\Gamma^2(\frac32-H_1)\Gamma(\frac32+H_1-2H_2)\Gamma(2H_2-H_1-\frac12)}
{\Gamma(2-2H_2)\Gamma(2H_2-2H_1)} .
\end{equation}

\smallskip
\emph{Step 6: Coincidence of the two diagonal limits.}
It remains to prove that $\ell_+=\ell_-$.
Set
\begin{equation}\label{eq:AB}
\begin{split}
B &:= (H_1-\tfrac12)B\!\left(2H_2-H_1-\tfrac12,\,1+2H_1-2H_2\right),
\\
C &:= \left(\tfrac32+H_1-2H_2\right)
B\!\left(\tfrac32-H_1,\,1+2H_1-2H_2\right).
\end{split}
\end{equation}
In this notation,
\[
\ell_+ = A - B \Xi,\qquad
\ell_- = C \Xi,
\]
where the first identity follows from \eqref{eq:Lplus-diag},
\eqref{eq:first-two-terms-closed}, and \eqref{eq:Lam-sum}, while the second one
is \eqref{eq:Lminus-diag-evaluated}. Hence $\ell_+=\ell_-$ is equivalent to
\begin{equation}\label{eq:A-BCXi}
A=(B+C)\Xi .
\end{equation}

To verify \eqref{eq:A-BCXi}, rewrite \eqref{eq:Xi-def} as
\begin{equation}\label{eq:Xi-final}
\Xi=\Gamma(2H_2-1)\left[
\frac{\Gamma(\frac32-H_1)}{\Gamma(2H_2-H_1-\frac12)}
-\frac{\Gamma(\frac32-2H_2+H_1)}{\Gamma(H_1-\frac12)}
\right],
\end{equation}
which follows by inserting the explicit expressions for $D_3$, $D_5$, and the
value of $F_5(1)$ computed by applying \eqref{eq:hypergeo-limit-1}.

Using $B(x,y)=\Gamma(x)\Gamma(y)/\Gamma(x+y)$ and
$\Gamma(H_1+\tfrac12)=(H_1-\tfrac12)\Gamma(H_1-\tfrac12)$, we get from \eqref{eq:AB} that
\begin{equation}\label{eq:B-gamma-noab}
B=\frac{\Gamma(2H_2-H_1-\frac12)\Gamma(1+2H_1-2H_2)}{\Gamma(H_1-\frac12)},
\qquad
C=\frac{\Gamma(\frac32-H_1)\Gamma(1+2H_1-2H_2)}{\Gamma(\frac32-2H_2+H_1)}.
\end{equation}
Combining \eqref{eq:B-gamma-noab} with the expression for $C$ and \eqref{eq:Xi-final}, we obtain
\begin{align}
(B+C)\Xi
&=\Gamma(2H_2-1)\Gamma(1+2H_1-2H_2)
\notag\\
&\quad\times
\biggl[
\frac{\Gamma^2(\frac32-H_1)}
{\Gamma(2H_2-H_1-\frac12)\Gamma(\frac32-2H_2+H_1)}
-\frac{\Gamma(2H_2-H_1-\frac12)\Gamma(\frac32-2H_2+H_1)}
{\Gamma^2(H_1-\frac12)}
\biggr],
\label{eq:BCXi-expanded-noab}
\end{align}
since the mixed terms cancel.

Finally, applying Euler's reflection formula $\Gamma(x)\Gamma(1-x)=\frac{\pi}{\sin(\pi x)}$ to the pairs
$\Gamma\!\left(\tfrac32-H_1\right)\Gamma\!\left(H_1-\tfrac12\right)$
and
$\Gamma\!\left(2H_2-H_1-\tfrac12\right)\Gamma\!\left(\tfrac32-2H_2+H_1\right)$, and using 
$\sin^2 x - \sin^2 y = \sin(x-y) \sin(x+y)$,
we can rewrite \eqref{eq:BCXi-expanded-noab} as
\begin{multline}\label{eq:BCXi-sine-noab}
(B+C)\Xi
=
\Gamma(2H_2-1)\Gamma(1+2H_1-2H_2)
\frac{\pi}{\Gamma^2(H_1-\frac12)}\,
\frac{\sin\bigl(\pi(2H_2-2H_1)\bigr)\sin\bigl(\pi(2H_2-1)\bigr)}
{\sin^2\!\bigl(\pi(H_1-\frac12)\bigr)\sin\bigl(\pi(2H_2-H_1-\frac12)\bigr)}.
\end{multline}

On the other hand, starting from \eqref{eq:first-two-terms-closed} and using
the reflection identities
\begin{gather*}
\frac{1}{\Gamma(2-2H_2)}
=\frac{\Gamma(2H_2-1)\sin(\pi(2H_2-1))}{\pi},
\\
\frac{1}{\Gamma(2H_2-2H_1)}
=\frac{\Gamma(1+2H_1-2H_2)\sin(\pi(2H_2-2H_1))}{\pi},
\\
\Gamma\!\left(\tfrac32+H_1-2H_2\right)\Gamma\!\left(2H_2-H_1-\tfrac12\right)
=\frac{\pi}{\sin\bigl(\pi(2H_2-H_1-\tfrac12)\bigr)},
\shortintertext{and}
\Gamma^2\!\left(\tfrac32-H_1\right)
=\frac{\pi^2}{\Gamma^2(H_1-\tfrac12)\sin^2(\pi(H_1-\tfrac12))},
\end{gather*}
we obtain exactly the same expression as in \eqref{eq:BCXi-sine-noab}.
Therefore $A=(B+C)\Xi$, hence $\ell_+=\ell_-$, and $L$ admits a continuous extension to the diagonal.
\end{proof}

\begin{remark}
Both the kernel $L(u,s)$ and the right-hand side $g_T(u)$ of the Fredholm equation \eqref{Fred_mle} are unbounded as $u\downarrow 0$ or $u\uparrow T$, due to the presence of the factors $u^{\frac12-H_1}$ and $(T-u)^{\frac12-H_1}$. 
In view of this, it is convenient to transform the equation as follows.
Define
\[
\widetilde h_T(u) = h_T(u)\, u^{H_1-\frac12}(T-u)^{H_1-\frac12},
\quad
\widetilde L(u,s) = L(u,s)\,
\frac{u^{H_1-\frac12}(T-u)^{H_1-\frac12}}
     {s^{H_1-\frac12}(T-s)^{H_1-\frac12}}.
\]
Then equation \eqref{Fred_mle} takes the form
\[
\widetilde h_T(u)
+ c(H_1,H_2)\! \int_0^{T}
\frac{\widetilde L(u,s)}{|u-s|^{1+2H_1-2H_2}}
\,\widetilde h_T(s)\,ds
=
\frac{1}
{2H_1 B \left(\frac{3}{2}-H_1,H_1+\frac{1}{2}\right)}
\]
for almost all $u\in(0,T)$. In particular, the right-hand side becomes constant, which simplifies the numerical solution of the equation.

However, the transformed kernel $\widetilde L(u,s)$ remains unbounded on the boundary of $[0,T]^2$: in this case it diverges as $s\downarrow 0$ and as $s\uparrow T$. In other words, the kernel still contains so-called ``additional singularities'' in the sense of \cite{Makogin21,MRZ2024}, which must be taken into account in the numerical treatment. To handle this, we employ the numerical approximation methods developed in \cite{MRZ2024} specifically for kernels of this type. 

We note that the kernel considered in \cite{MRZ2024} was unbounded on different parts of the boundary of $[0,T]^2$, namely along the coordinate axes; however, the adaptation of the method to the present setting is straightforward. 

Figures~\ref{fig:kernels} illustrate the surface plots of the kernels $K(u,s)$ and $L(u,s)$. While $K(u,s)$ exhibits a singularity along the diagonal $u=s$, the kernel $L(u,s)$ remains bounded and continuous there. At the same time, both kernels display divergent behaviour as $u\downarrow 0$ or $u\uparrow T$.
\end{remark}

\begin{figure}
    \centering
    \subfigure[$K(u,s)$]{\includegraphics[width=0.49\linewidth]{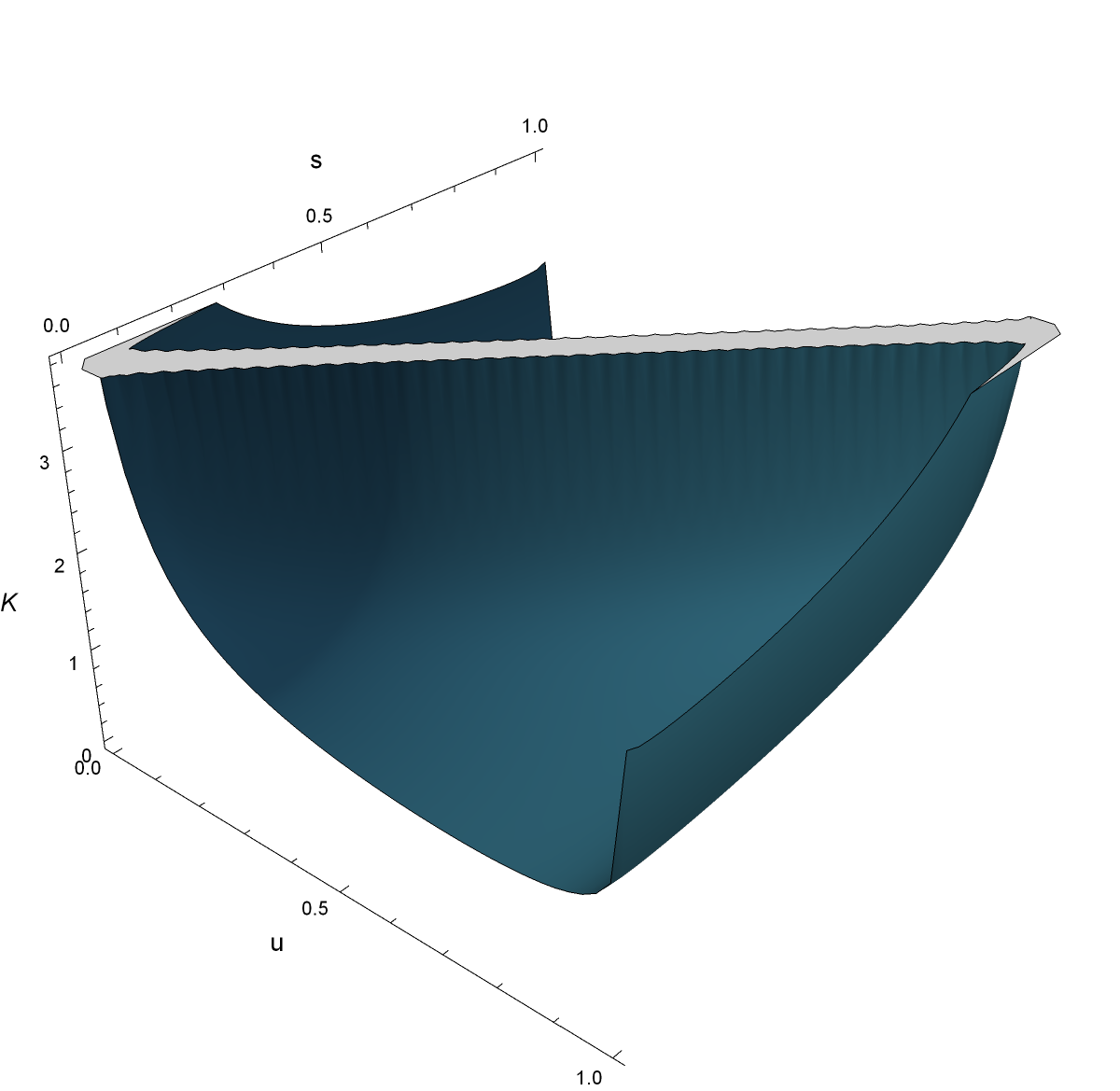}}
    \subfigure[$L(u,s)$]{\includegraphics[width=0.49\linewidth]{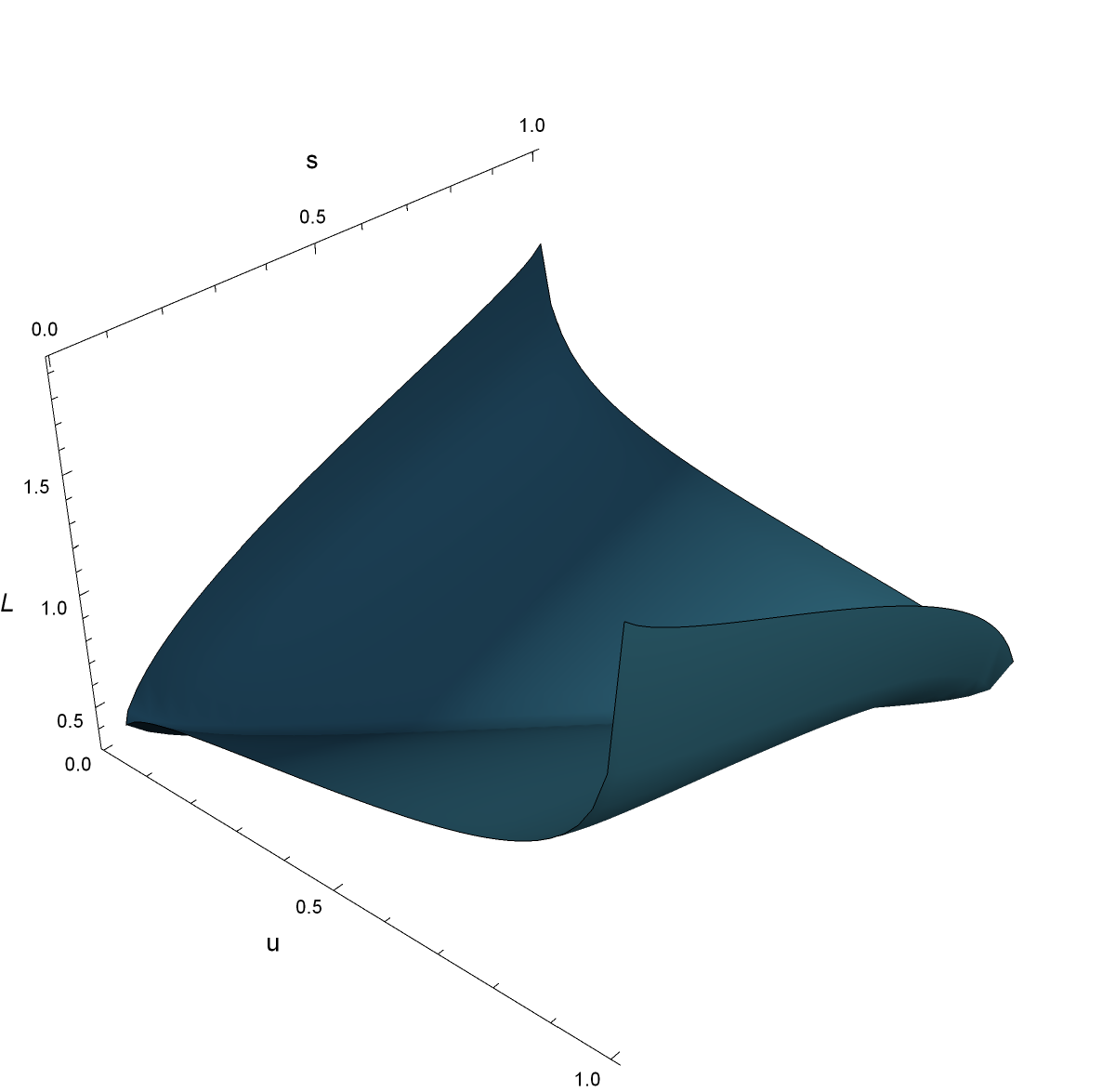}}
    \caption{Surface plots of the kernel $K(u,s)$ and its numerator $L(u,s)$ for $H_1 = 0.7$, $H_2 = 0.85$, and $T=1$.}
    \label{fig:kernels}
\end{figure}

\section{Numerical simulations}
\label{sec:simul}

This section describes the numerical solution of the Fredholm equation arising in the construction of the MLE and presents simulation results illustrating the accuracy of the proposed method.

\subsection{Numerical solution of the Fredholm equation}

To approximate the solution of the Fredholm equation \eqref{Fred_mle}, we employ a modified product-integration method for weakly singular kernels (see, e.\,g., \cite{Prem}). This method is based on a modification proposed by Neta \cite{Neta}. Here we describe the main steps of this numerical method, adapted for our integral equation.

We consider the following generalization of the equation~\eqref{Fred_mle}:
\begin{equation}\label{Fred_mle1}
h_T(u) + c(H_1,H_2)\, \int_0^{T} h_T(s)\, |u-s|^{\gamma-1} L(u,s)\,ds = g(u),
\quad u \in [0,T],
\end{equation}
where the right-hand side $g\in L_2[0,T]$ is an arbitrary given function and $\gamma = 2H_2-2H_1 >0$. 
Define
\[
L^{(n)}(s,u) = L\left((s\vee\tfrac{1}{n})\wedge(T-\tfrac{1}{n}),
        (u\vee\tfrac{1}{n})\wedge(T-\tfrac{1}{n})\right).
\]
Then, obviously, $L^{(n)}(s,u)=L (s ,u ) $ for $s,u \in [1/n, T-1/n]$.
The corresponding approximate equation has the form
\begin{equation}\label{equout1}
g(u) =    h^{(n)}_T(u) + c(H_1,H_2)\int_0^T  h^{(n)}_T(s) |u-s|^{\gamma-1} L^{(n)}(u,s)\,ds,
\end{equation}

We start with a given number $N+1 \geq 1$ of equally spaced points
$0= t_1 < t_2 < \dots < t_{N+1} =T$ and $\delta= t_{j+1}-t_j=\frac{T}{N}$ (i.\,e.\ $t_j=(j-1)\delta$, $j=1,\dots,N+1$).  By the product-integration rule
\[
\int_{t_j}^{t_{j+1}}L^{(n)}(s,u)|u-s|^{\gamma-1} h^{(n)}_T(s)ds
\approx \int_{t_j}^{t_{j+1}} \frac{L_j(u)(t_{j+1}-s)h_j + (s-t_j)L_{j+1}(u)h_{j+1}}{\delta}|u-s|^{\gamma-1}ds,
\]
where $L_j(u)=L^{(n)}(t_j,u)$ and $h_j = h^{(n)}_T(t_j)$. Then we have
\begin{equation}\label{f_sum}
g(u)=c(H_1,H_2)\sum_{j=1}^{N-1} \left(L_j(u) h_j \psi_{j}^{1}(u)+ L_{j+1}(u) h_{j+1} \psi_ {j+1}^{2} (u)\right)+h^{(n)}_T(u),
\end{equation}
where the weights are defined by
\[
\psi_{j}^{1} (u) = \frac{1}{\delta} \int_{t_j}^{t_{j+1}} (t_{j+1}-s) |u-s|^{\gamma-1} ds,
\qquad
\psi_{j}^{2} (u) = \frac{1}{\delta} \int_{t_{j-1}}^{t_j} (s-t_{j-1}) |u-s|^{\gamma-1} ds.
\]
Substituting $u=t_i$ in \eqref{f_sum} and rearranging the sums, we obtain the following system of linear equations for the approximation of the integral equation \eqref{equout1}
\begin{equation}\label{fj_sum}
h_i+c(H_1,H_2)\sum_{j=1}^{N+1} L_{i,j} \left( \psi_{j, i}^{1}+ \psi_{j, i}^{2} \right) h_j  =g_i,
\quad i = 1, \ldots, N+1,
\end{equation}
where $L_{i,j}=L^{(n)}(t_j,t_i),$ $\psi_{j, i} ^{l} = \psi_{j}^l (t_i)$ for $l =1,2,$ and $g_i= g(t_i)$.
In \eqref{fj_sum}, we assume that $\psi_{N, i}^{1} = \psi_{0, i}^{2} = 0$ for all $i$.
From a practical point of view, it is convenient to choose $n = N$. Then
\[
L_{i,j}=
\begin{cases}
L(t_j,t_i), & i=2,\dots, N, \; j=2,\dots, N,\\
L(1/N,t_i), & j=1,\\
L(t_j,1/N), & i=1,\\
L(T-1/N,t_i), & j=N+1,\\
L(t_j,T-1/N), & i=N+1.
\end{cases}
\]
On the selected grid, weights then can be calculated explicitly (and independent of function $L(u,s)$): 
\begin{gather*}
    \psi_{j, i}^{1}=
    \begin{cases}
    \frac{\delta^\gamma}{\gamma(\gamma+1)}\bigl((j-i+1)^{\gamma+1}-(j-i)^{\gamma}(j-i+\gamma+1)\bigr), & i\le j,\\
    \frac{\delta^\gamma}{\gamma(\gamma+1)}\bigl((i-j-1)^{\gamma+1}-(i-j)^{\gamma}(i-j-\gamma-1)\bigr),& \ i > j,\\
    0, & j=N+1;
    \end{cases}
\\
    \psi_{j, i}^{2}=
    \begin{cases}
    \frac{\delta^\gamma}{\gamma(\gamma+1)}\bigl((j-i-1)^{\gamma+1}-(j-i)^{\gamma}(j-i-\gamma-1)\bigr), & i< j,\\
    \frac{\delta^\gamma}{\gamma(\gamma+1)}\bigl((i-j+1)^{\gamma+1}-(i-j)^{\gamma}(i-j+\gamma+1)\bigr), & i \ge j,\\
    0, & j=1.
    \end{cases}
\end{gather*}
This representation is significantly more accurate than directly approximating the corresponding integrals, since most of the weight is concentrated along the singularity line $u=s$, as illustrated in Figure~\ref{fig:psi}.
\begin{figure}[htp]
    \centering
    \includegraphics[width=0.6\linewidth]{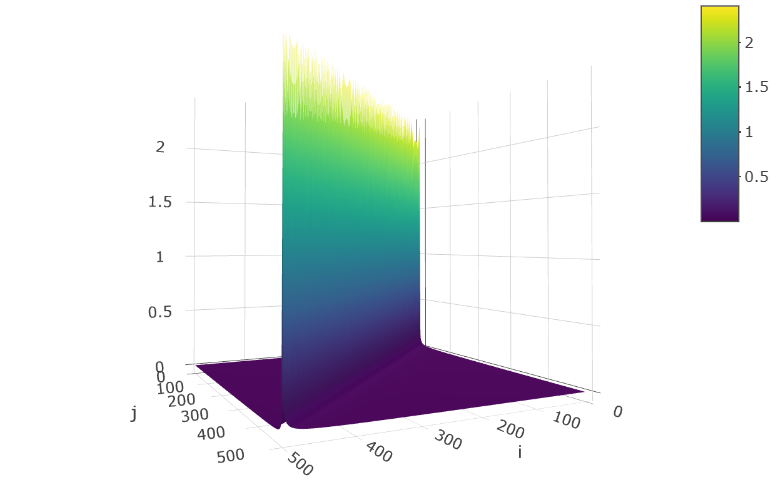}
    \caption{Surface plot for sum $\psi_{j, i}^{1}+\psi_{j, i}^{2}$ on a grid $(N+1)\times(N+1)$}
    \label{fig:psi}
\end{figure}

The system \eqref{fj_sum} can be written in matrix form as
\begin{equation}\label{linear_eq}
(\mathbf{I}_{N+1}+ \mathbf{K}_{N+1}) \textbf{H}_{N+1} = \textbf{G}_{N+1},
\end{equation}
where $\mathbf{H}_{N+1}$ and $\mathbf{G}_{N+1}$ are vectors whose components are $h_i$ and $g_i$, $i=1,\ldots,N+1$, respectively, $\mathbf{I}_{N+1}$ is the $(N+1)\times (N+1)$ identity matrix, and $\mathbf{K}_{N+1}$ is $(N+1)\times (N+1)$-matrix with components
\[
K_{i,j}= L_{i,j}\left(\psi_{j, i}^{1}+ \psi_{j, i}^{2} \right), \quad i,j=1,\ldots,N+1.
\]
The approximate solution is obtained by solving a linear system of algebraic equations \eqref{linear_eq}.

\subsection{Accuracy of the numerical approximation}
We now present a numerical example that illustrates the accuracy of the proposed method. We compare the approximate solution of equation \eqref{Fred_mle1} with the exact solution given by $h_T(u):= u$, $u\in[0,T]$.
The left-hand side of \eqref{Fred_mle1} is computed by
\[
g(u) = u + c(H_1,H_2)\int_0^T  s |u-s|^{\gamma-1} L^{(n)}(u,s)\,ds, \quad  u \in  [0, T].
\]

We construct solution $\widehat{h}(u)$, $u \in [0,1]$ using $N=500$, $H_1 =0.6$ and $H_2 = 0.7$. To reduce computational complexity, the hypergeometric functions $F_1$--$F_5$ are replaced with precomputed linear approximations on a grid of $10^5$ points. This reduces the function $L(u,s) $'s computation time by approximately 180 times on average, depending on the computatuional environment. The difference between the true value $h(u)$ and its approximation $\widehat{h}(u)$ is demonstrated in Figure~\ref{fig:log}.

\begin{figure}[htp]
    \centering
    \includegraphics[width=11cm]{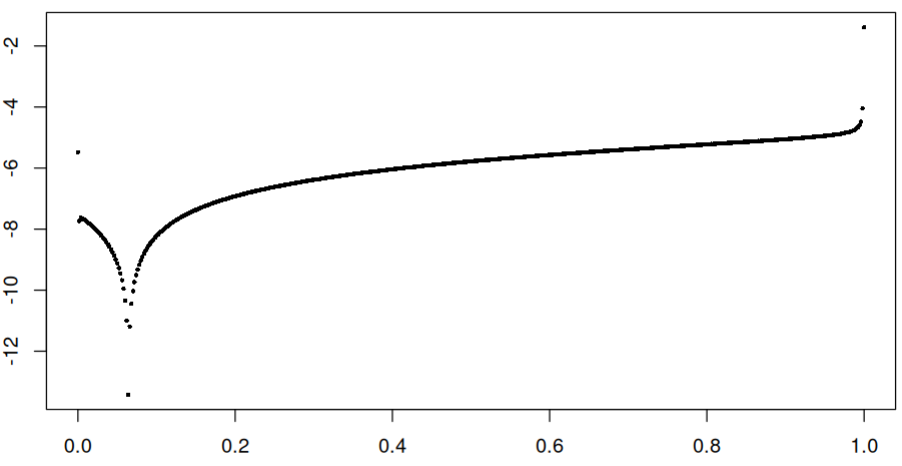}
    \caption{Natural logarithm plot of absolute difference between true $h(u)$ and an estimated solution on $[0,1]$}
    \label{fig:log}
\end{figure}

On average, the precision is approximately $10^{-6}$, with the weakest points at $u=0$ and $u=T$. In terms of computational power usage, estimating the function $g$ based on a pre-defined $h$ to evaluate the method accuracy is the most time-consuming, while constructing a solution for the pre-defined $g$ is relatively fast.
Specifically, the computation of the following integral 
\begin{equation*}
    I(u) := \int_0^T  h(s) |u-s|^{\gamma-1} L^{(n)}(u,s)\,ds, \ u \in[0,T],
\end{equation*}
without numerical methods and with an adequate precision. With the function $h(u)=u$ we get the following form of function $I(u), u \in [0,T]$ as presented in the Figure~\ref{fig:lin}.

\begin{figure}[htp]
    \centering
    \includegraphics[width=10cm]{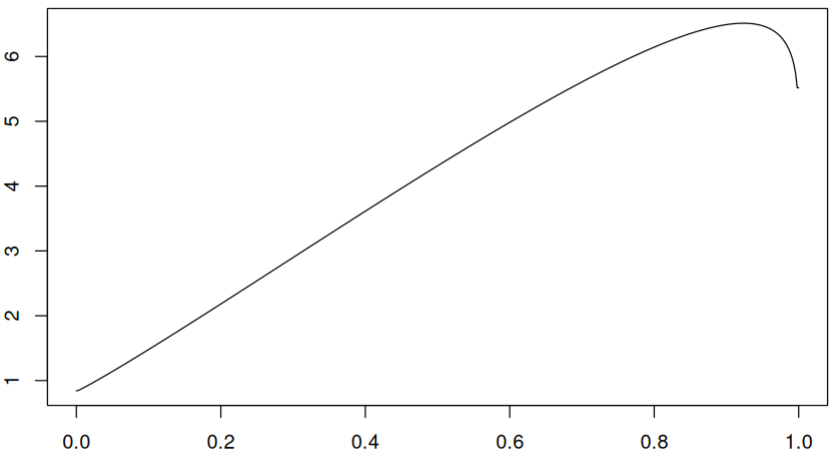}
    \caption{Integral of $h(u)=u$ with kernel $K(u,s)$ on $[0,1]$}
    \label{fig:lin}
\end{figure}

\subsection{Monte Carlo study of the estimator}
Finally, we apply the proposed method to compute the MLE $\widehat\theta_T$ defined in~\cite[Sect.~6.4.5]{MRStwofbm}, the MLE of the drift parameter $\theta$ in model \eqref{eq:themodel}. For each set of parameters $(H_1,H_2)$, we start by solving the integral Fredholm equation \eqref{Fred_mle} by the previously established numerical method, constructing an approximation of the function $h(u), u\in[0,T]$ based on a fixed $N=500$.
Then we generate 1000 trajectories of the process $X_t$ with the true value of drift parameter $\theta=1$, for a total of $\widetilde N = 100 \,T + 1$ data points, which increases with $T$. The empirical means and empirical variances of the estimates for various values of the time horizon $T$ are reported in Tables~\ref{tb1} and \ref{tb2}, respectively. Additionally, we evaluate theoretical variance of the estimator $\widehat\theta_T$ by formula \eqref{eq-prop-contt-var}. 

Analyzing Table~\ref{tb1}, we see that the estimator is clearly unbiased, with empirical variances in Table~\ref{tb2} that tend to zero, confirming the consistency of the estimator.

An important advantage of the method is that the most time-consuming step is the approximation of the function $h_T$, which is independent of the trajectories of the process.
Thus $h_T$ can be computed once for given values of $H_1$, $H_2$, and $T$ and subsequently reused for different trajectories of $X_t$.
This significantly reduces the overall computational cost when estimating the drift parameter from a large number of simulated or observed trajectories.
Another practical advantage is that the estimator is computed directly from the observed process and does not require any preliminary transformation of the data.
For each set of 1000 simulated trajectories, only one function $h_T$ was constructed.
Overall, the resulting estimator shows an accuracy comparable to that of the corresponding alternatives.
The observed differences are very small and lie within the range of Monte Carlo variability.

\begin{table}
\centering
\caption{Sample means of the estimator $\widehat\theta_T$}\label{tb1}
\begin{tabular}{lllll}
\hline
& $H_1= 0.6$, & $H_1= 0.6$, & $H_1= 0.6$, & $H_1= 0.7$,\\
$T$ & $H_2= 0.7$ & $H_2= 0.8$ & $H_2= 0.9$ & $H_2= 0.9$\\
\hline\\[-8pt]
5   & 0.98372 & 0.97900 & 0.97284 & 0.97269\\
10  & 0.97780 & 0.97505 & 0.97201 & 0.97117\\
25  & 0.99381 & 0.98929 & 0.98319 & 0.98325\\
50  & 0.99737 & 0,99432 & 0.98943 & 0.99067\\
100 & 1.00025 & 1.00117 & 1.00059 & 0.99913\\
200 & 1.01209 & 1.02222 & 1.03962 & 1.03975\\
[1pt]
\hline
\end{tabular}
\end{table}

\begin{table}
\centering
\caption{Empirical and theoretical variances of the estimator $\widehat\theta_T$}\label{tb2}
\begin{tabular}{llllll}
\hline
& Variance & $H_1= 0.6$, & $H_1= 0.6$, & $H_1= 0.6$, & $H_1= 0.7$,\\
$T$ & type & $H_2= 0.7$ & $H_2= 0.8$ & $H_2= 0.9$ & $H_2= 0.9$\\
\hline\\[-8pt]
5   & Empirical & 0.63890 & 0.78966 & 1.00644 & 1.10611\\
    & Theoretical & 0.65127 & 0.79374 & 0.99383 & 1.09431\\
10  & Empirical & 0.45875 & 0.61902 & 0.87470 & 0.97661\\
    & Theoretical & 0.40610 & 0.55083 & 0.78328 & 0.87288\\
25  & Empirical & 0.22136 & 0.35629 & 0.61758 & 0.68414\\
    & Theoretical & 0.21902 & 0.34796 & 0.59590 & 0.66281\\
50  & Empirical & 0.14286 & 0.25307 & 0.49498 & 0.54829\\
    & Theoretical & 0.13798 & 0.24963 & 0.49584 & 0.54649\\
100 & Empirical & 0.09318 & 0.18996 & 0.43055 & 0.47126\\
    & Theoretical & 0.08729 & 0.18107 & 0.41834 & 0.45552\\
200 & Empirical & 0.05856 & 0.13976 & 0.37671 & 0.40631\\
    & Theoretical & 0.05543 & 0.13254 & 0.35645 & 0.38312\\
[1pt]
\hline
\end{tabular}
\end{table}
\FloatBarrier

\appendix
\section{}

\subsection{Riemann--Liouville fractional integrals and derivatives}
\label{app:fraccalc}

\paragraph{Riemann--Liouville fractional integrals}
Let $0<\alpha<1$. For a measurable function
$f\colon[a,b]\to\mathbb{R}$, the \emph{Riemann--Liouville left- and right-sided
fractional integrals of order $\alpha$} are defined by
\[
(I^\alpha_{a+} f)(x)
= \frac{1}{\Gamma(\alpha)}\int_a^x (x-t)^{\alpha-1} f(t)\,dt,
\qquad
(I^\alpha_{b-} f)(x)
= \frac{1}{\Gamma(\alpha)}\int_x^b (t-x)^{\alpha-1} f(t)\,dt.
\]

For any $\alpha>0$ and $1\le p\le\infty$, the operators
$I^\alpha_{a+}$ and $I^\alpha_{b-}$ are bounded linear operators
$L_p(a,b)\to L_p(a,b)$, and
\[
\|I^\alpha_{a+} f\|_{L_p(a,b)}
\le
\frac{(b-a)^\alpha}{\Gamma(\alpha+1)}\,\|f\|_{L_p(a,b)},
\qquad
\|I^\alpha_{b-} f\|_{L_p(a,b)}
\le
\frac{(b-a)^\alpha}{\Gamma(\alpha+1)}\,\|f\|_{L_p(a,b)}.
\]
See \cite[(2.72)--(2.73)]{Samko}.

It is also known that $I^\alpha_{a+}$ and $I^\alpha_{b-}$ are injective operators on $L_p(a,b)$ for $1\le p\le\infty$. In particular, if $I^\alpha_{a+}f=0$ a.e. on $(a,b)$, then $f=0$ a.e.; see, e.g., \cite[Ch.~2]{Samko}.

\paragraph{Riemann--Liouville fractional derivatives}
Let $0<\alpha<1$. The \emph{Riemann--Liouville left- and right-sided
fractional derivatives} are defined by
\begin{equation}
\mathcal{D}^{\alpha}_{a+} f(x)
= \frac{1}{\Gamma(1-\alpha)}\frac{d}{dx}
\int_a^x \frac{f(t)}{(x-t)^\alpha}\,dt,
\qquad
\mathcal{D}^{\alpha}_{b-} f(x)
= -\frac{1}{\Gamma(1-\alpha)}\frac{d}{dx}
\int_x^b \frac{f(t)}{(t-x)^\alpha}\,dt.
\label{eq:derright}
\end{equation}

Denote by $I^\alpha_{a+}(L_p)$ (resp.\ $I^\alpha_{b-}(L_p)$) the class of functions $f$ that can be presented as $f=I^\alpha_{a+}\varphi$ (resp.\ $f=I^\alpha_{b-}\varphi$) for $\varphi\in L_p(a,b)$.

For $f\in I^\alpha_{a+}(L_1)$ (resp.\ $f\in I^\alpha_{b-}(L_1)$), the corresponding Riemann--Liouville fractional derivatives admit
the following \emph{Weyl representation}
for almost all
$x\in(a,b)$:
\begin{align}
\mathcal{D}^\alpha_{a+} f(x)
&=
\frac{1}{\Gamma(1-\alpha)}
\left(
\frac{f(x)}{(x-a)^\alpha}
+
\alpha\int_a^x\frac{f(x)-f(y)}{(x-y)^{\alpha+1}}\,dy
\right),
\notag\\
\mathcal{D}^\alpha_{b-} f(x)
&=
\frac{1}{\Gamma(1-\alpha)}
\left(
\frac{f(x)}{(b-x)^\alpha}
+
\alpha\int_x^b\frac{f(x)-f(y)}{(y-x)^{\alpha+1}}\,dy
\right),
\label{eq:derright-weyl}
\end{align}
see \cite[Sect.~13.1]{Samko}.

\subsection{Hypergeometric function}
\label{app:hypergeo}

The \emph{hypergeometric function} $\hyper(a,b;c;z)$ is defined
for real parameters $a$, $b$, $c$ and real argument $z$.
Throughout the paper we assume $c>b>0$ and $z<1$.
In this case $\hyper(a,b;c;z)$ admits the Euler integral representation
\begin{equation}\label{eq:hypergeom}
\hyper(a,b;c;z)
=
\frac{\Gamma(c)}{\Gamma(b)\Gamma(c-b)}
\int_0^1
t^{\,b-1}(1-t)^{c-b-1}(1-zt)^{-a}\,dt,
\end{equation}
which defines an analytic function of $z$ on $(-\infty,1)$,
see \cite[Eq.~15.3.1]{Abramowitz-Stegun}.

In this paper we only use the case $z\in[0,1)$.
At $z=0$ the hypergeometric function is continuous and equals $1$.

The following transformation formulas can be derived from the representation \eqref{eq:hypergeom}
(see \cite[15.3.3 and 15.3.6]{Abramowitz-Stegun}):
\begin{equation}\label{eq:AS-15-3-3}
{}_2F_1(a,b;c;z)
=
(1-z)^{\,c-a-b}\,
{}_2F_1(c-a,c-b;c;z)
\end{equation}
and
\begin{multline}\label{eq:hypergeo-connection}
\hyper(a,b;c;z)
=\frac{\Gamma(c)\Gamma(c-a-b)}{\Gamma(c-a)\Gamma(c-b)}
\,\hyper(a,b;a+b-c+1;1-z)
\\
{}+\frac{\Gamma(c)\Gamma(a+b-c)}{\Gamma(a)\Gamma(b)}
(1-z)^{\,c-a-b}
\,\hyper(c-a,c-b;c-a-b+1;1-z).
\end{multline}

\paragraph{Behavior as $z\uparrow1$}
Let
\[
\lambda:=c-a-b.
\]
The behavior of $\hyper(a,b;c;z)$ as $z\uparrow1$ is governed by the sign of $\lambda$,
see \cite[\S15.1]{Abramowitz-Stegun}.
In particular, in the case $\lambda>0$, $\hyper(a,b;c;z)$ is finite at $z=1$ and
\begin{equation}\label{eq:hypergeo-limit-1}
\hyper(a,b;c;1)
=
\frac{\Gamma(c)\Gamma(c-a-b)}{\Gamma(c-a)\Gamma(c-b)},
\end{equation}
see \cite[Eq.~15.1.20]{Abramowitz-Stegun}.
Moreover, by \eqref{eq:hypergeo-connection}, if $\lambda>0$ then
\[
\hyper(a,b;c;z)
=
\hyper(a,b;c;1)
+
O\!\left((1-z)^{\lambda\wedge1}\right),
\qquad z\uparrow1.
\]
Since $\hyper(a,b;c;z)$ is analytic on $[0,1)$, it follows that
$\hyper(a,b;c;\cdot)$ extends to a H\"older continuous function on $[0,1]$
with exponent $\lambda$ when $0<\lambda<1$, and to a Lipschitz continuous
function on $[0,1]$ when $\lambda\ge1$.

\paragraph{Differentiation}
The hypergeometric function satisfies
\begin{equation}\label{eq:hypergeo-deriv-app}
\frac{d}{dz}\,\hyper(a,b;c;z)
=
\frac{ab}{c}\,
\hyper(a+1,b+1;c+1;z),
\end{equation}
see \cite[Eq.~15.2.1]{Abramowitz-Stegun}.

\subsection{Some integrals related to beta-function}
Recall that for $a>0$ and $b>0$ the beta-function admits the following integral representations
\begin{equation}\label{eq:beta}
B(a,b)
=\int_0^1 y^{a-1} (1-y)^{b-1}dy
=\int_0^\infty x^{a-1}(1+x)^{-a-b}dx.
\end{equation}
\begin{lemma}\label{lem:beta-integrals}
Let $\mu\in(0,1)$ and $\nu\in(0,1)$. Then 
\begin{enumerate}[(i)]
\item
$\displaystyle\int_0^1 y^{-\mu-1}\bigl((1-y)^{-\nu}-1\bigr)dy
=\frac{\mu+\nu-1}{\mu}B(1-\mu,1-\nu)+\frac{1}{\mu}$,

\item
$\displaystyle\int_{0}^{\infty} x^{-\mu-1}\bigl(1-(1+x)^{-\nu}\bigr)\,dx
=
\frac{\nu}{\mu} B(1-\mu,\mu+\nu)$.
\end{enumerate}
\end{lemma}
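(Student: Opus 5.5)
Both identities reduce to the beta integrals \eqref{eq:beta} after a single integration by parts. The boundary terms vanish because $\mu,\nu\in(0,1)$.

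\emph{Part (ii).} Write $x^{-\mu-1}=-\frac{1}{\mu}\frac{d}{dx}x^{-\mu}$ and integrate by parts on $(0,\infty)$:
\[
\int_0^\infty x^{-\mu-1}\bigl(1-(1+x)^{-\nu}\bigr)dx
=\Bigl[-\tfrac1\mu x^{-\mu}\bigl(1-(1+x)^{-\nu}\bigr)\Bigr]_0^\infty
+\frac{\nu}{\mu}\int_0^\infty x^{-\mu}(1+x)^{-\nu-1}dx .
\]
At $x\to0$ we have $1-(1+x)^{-\nu}\sim\nu x$, so the bracket behaves like $x^{1-\mu}\to0$. At $x\to\infty$ the bracket is $O(x^{-\mu})\to0$. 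The remaining integral is the second representation in \eqref{eq:beta} with $a=1-\mu$ and $a+b=\nu+1$, that is $b=\mu+\nu$. Hence it equals $B(1-\mu,\mu+\nu)$, which gives (ii).

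\emph{Part (i).} Integrate by parts on $(0,1)$ in the same way, and also subtract the constant $1$:
\[
\int_0^1 y^{-\mu-1}\bigl((1-y)^{-\nu}-1\bigr)dy
=\Bigl[-\tfrac1\mu y^{-\mu}\bigl((1-y)^{-\nu}-1\bigr)\Bigr]_{0}^{1}+\frac{\nu}{\mu}\int_0^1 y^{-\mu}(1-y)^{-\nu-1}dy .
\]
The last integral and the upper boundary term both diverge at $y=1$. So I will work on $(0,1-\varepsilon)$ and then let $\varepsilon\to0$.

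1. On $(0,1-\varepsilon)$ the boundary term at $0$ vanishes, as before. The boundary term at $1-\varepsilon$ is
\[
-\tfrac1\mu(1-\varepsilon)^{-\mu}\bigl(\varepsilon^{-\nu}-1\bigr).
\]

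2. In the integral, write
\[
y^{-\mu}(1-y)^{-\nu-1}=(1-y)^{-\nu-1}+\bigl(y^{-\mu}-1\bigr)(1-y)^{-\nu-1}.
\]
The first piece integrates explicitly to $\frac1\nu\bigl(\varepsilon^{-\nu}-1\bigr)$. The second piece is integrable at $y=1$, since $y^{-\mu}-1=O(1-y)$ there. Its value is found by a second integration by parts, putting the derivative on $(1-y)^{-\nu}$. This produces $\frac{\mu}{\nu}B(1-\mu,1-\nu)$ plus explicit boundary terms. Equivalently, one can use the analytic continuation $B(1-\mu,-\nu)=-\frac{1-\mu-\nu}{\nu}B(1-\mu,1-\nu)$.

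3. Collect the divergent terms: they are $-\frac1\mu\varepsilon^{-\nu}$ from step 1 and $+\frac{\nu}{\mu}\cdot\frac1\nu\varepsilon^{-\nu}=\frac1\mu\varepsilon^{-\nu}$ from step 2. These cancel exactly, so the limit $\varepsilon\to0$ exists.

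4. The finite remainder then reduces to $\frac{\mu+\nu-1}{\mu}B(1-\mu,1-\nu)+\frac1\mu$.

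A consistency check is the limit $\nu\to0$. There the left-hand side tends to $0$, and the right-hand side tends to $\frac{\mu-1}{\mu}\cdot\frac{1}{1-\mu}+\frac1\mu=0$, so the two sides agree.

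The main obstacle is the bookkeeping in part (i). One must check that the $\varepsilon^{-\nu}$ divergences cancel, and that the $O(\varepsilon^{1-\nu})$ corrections from $(1-\varepsilon)^{-\mu}$ vanish in the limit. The cleanest route is to treat $\nu$ as a complex parameter with $\operatorname{Re}\nu<0$. In that case every integral converges and the computation is just the beta identity plus $B(1-\mu,-\nu)$ rewritten as above. Both sides are analytic in $\nu$ on $\operatorname{Re}\nu<1$, because the left integral converges there, so the identity extends to $\nu\in(0,1)$.
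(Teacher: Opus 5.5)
Your proposal does contain a correct proof, provided part (i) rests on the analytic continuation in your last paragraph. It takes a different route from the paper's.

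\emph{Part (ii).} You integrate by parts once. The paper instead writes $1-(1+x)^{-\nu}=\nu x\int_0^1(1+rx)^{-\nu-1}\,dr$, applies Fubini and rescales. The two are equally short.

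\emph{Part (i), the paper's route.} The paper never meets a divergence. The substitution $y=x/(1+x)$ moves the singularity at $y=1$ to $x=\infty$, and the integrand becomes $x^{-\mu-1}\bigl((1+x)^{\mu+\nu-1}-(1+x)^{\mu-1}\bigr)$. One integration by parts, whose boundary terms vanish because $\nu<1$, then leaves the convergent integrals $B(1-\mu,1-\nu)$ and $B(1-\mu,1)=\frac{1}{1-\mu}$.

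\emph{Part (i), your route.} You prove the identity for $\operatorname{Re}\nu<0$. There the boundary term at $y=1$ is exactly $\frac1\mu$, and the remaining integral is $\frac\nu\mu B(1-\mu,-\nu)=\frac{\mu+\nu-1}{\mu}B(1-\mu,1-\nu)$. You then continue in $\nu$. This is valid, but holomorphy of the left side on $\operatorname{Re}\nu<1$ needs a locally uniform integrable majorant (bounded by $Cy^{-\mu}$ near $0$ and by $(1-y)^{-\operatorname{Re}\nu}$ near $1$), not merely convergence. The paper's route stays real-variable; yours replaces the well-chosen substitution with a standard but heavier continuation argument.

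\emph{The $\varepsilon$-truncation sketch fails at step 2.} Drop it or repair it. Steps 1--3 are fine and actually show that every $\varepsilon$-dependent term cancels, including the constants $\pm\frac1\mu$. So $I=\frac\nu\mu J$ with $J=\int_0^1(y^{-\mu}-1)(1-y)^{-\nu-1}\,dy$, and everything rests on computing $J$.

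The integration by parts you describe (integrate $(1-y)^{-\nu-1}$, differentiate $y^{-\mu}-1$) produces $\frac\mu\nu\int_0^1 y^{-\mu-1}(1-y)^{-\nu}\,dy$. This diverges at $y=0$: the exponent is $-\mu-1$, not $-\mu$, so it is not $\frac\mu\nu B(1-\mu,1-\nu)$. If you use the antiderivative $\frac1\nu\bigl((1-y)^{-\nu}-1\bigr)$ to remove that divergence, you only get back $J=\frac\mu\nu I$, which is circular.

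In fact $J=\frac{\mu+\nu-1}{\nu}B(1-\mu,1-\nu)+\frac1\nu$. This is not $\frac\mu\nu B(1-\mu,1-\nu)$ plus boundary terms, so step 4 does not follow from step 2. A real-variable repair is to split
\[
(y^{-\mu}-1)(1-y)^{-\nu-1}=y^{-\mu}(1-y)^{-\nu}+(y^{1-\mu}-1)(1-y)^{-\nu-1}.
\]
The first term integrates to $B(1-\mu,1-\nu)$. Integrating only the second term by parts gives $\frac1\nu-\frac{1-\mu}{\nu}B(1-\mu,1-\nu)$, and adding the two recovers the value of $J$ above.
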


\begin{proof}
$(i)$
With the substitution $x=\frac{y}{1-y}$, i.e.\ 
$y=\frac{x}{1+x}$ and $dy=(1+x)^{-2}dx$, the integral becomes
\[
I \coloneqq \int_0^1 y^{-\mu-1}\bigl((1-y)^{-\nu}-1\bigr)dy
= \int_0^\infty x^{-\mu-1}
\bigl((1+x)^{\mu+\nu-1}-(1+x)^{\mu-1}\bigr)dx .
\]
Let $f(x)=(1+x)^{\mu+\nu-1}-(1+x)^{\mu-1}$. Since
$f(x)\sim \nu x$ as $x\downarrow0$ and
$x^{-\mu}f(x)\to0$ as $x\to\infty$, integration by parts with
$d(x^{-\mu})=-\mu x^{-\mu-1}dx$ gives
\[
I = \frac{1}{\mu}\int_0^\infty x^{-\mu}f'(x)\,dx .
\]
Since
\[
f'(x) = (\mu+\nu-1)(1+x)^{\mu+\nu-2} - (\mu-1)(1+x)^{\mu-2},
\]
the standard beta-integral identity \eqref{eq:beta}
yields
\[
I = \frac{\mu+\nu-1}{\mu}B(1-\mu,1-\nu)
-\frac{\mu-1}{\mu}B(1-\mu,1).
\]
Since $B(1-\mu,1)=\frac{1}{1-\mu}$, the claim follows.

\medskip
$(ii)$
Using the representation
\[
1-(1+x)^{-\nu}
=\nu \int_0^x (1+y)^{-\nu-1}dy
=\nu x\int_0^1(1+r x)^{-\nu-1}dr
\]
and the Fubini theorem, we get
\[
\int_0^\infty x^{-\mu-1}\bigl(1-(1+x)^{-\nu}\bigr)dx
=
\nu\int_0^1\!\!
\int_0^\infty x^{-\mu}(1+r x)^{-\nu-1}dx\,dr .
\]
The substitution $y=r x$ gives
\[
\int_0^\infty x^{-\mu}(1+r x)^{-\nu-1}dx
=
r^{\mu-1}B(1-\mu,\mu+\nu),
\]
and integration in $r$ yields the result.
\end{proof}

\end{document}